\documentclass[12pt,a4paper,reqno]{amsart}
\usepackage[UKenglish]{babel}
\usepackage[T1]{fontenc}
\usepackage{palatino}
\usepackage{amsmath}
\usepackage{mathabx}
\usepackage{amsthm}
\usepackage{xcolor}
\usepackage{enumerate}
\usepackage{comment}
\usepackage[colorlinks = true, linkcolor={red!80!black}, citecolor = {blue!60!black}]{hyperref}

\newcommand{\N}{\mathbb{N}}
\newcommand{\Z}{\mathbb{Z}}
\newcommand{\R}{\mathbb{R}}
\newcommand{\e}{\epsilon}

\theoremstyle{plain}
\newtheorem{thm}{Theorem}
\newtheorem{cor}[thm]{Corollary}
\newtheorem{prop}[thm]{Proposition}
\newtheorem{lem}[thm]{Lemma}

\theoremstyle{definition}
\newtheorem{definition}[thm]{Definition}
\newtheorem{remark}[thm]{Remark}
\newtheorem{ex}[thm]{Example}

\numberwithin{equation}{section}
\numberwithin{thm}{section}

\author{Tuomas Orponen and Pablo Shmerkin}
\title[Furstenberg type estimates]{Furstenberg-type estimates under mild non-concentration assumptions}
\address{Department of Mathematics and Statistics\\ University of Jyv\"askyl\"a,
P.O. Box 35 (MaD)\\
FI-40014 University of Jyv\"askyl\"a\\
Finland} \email{tuomas.t.orponen@jyu.fi}
\address{Department of Mathematics\\
The University of British Columbia\\
1984 Mathematics Road, Vancouver, BC\\
Canada} \email{pshmerkin@math.ubc.ca}
\date{\today}
\subjclass[2010]{28A80 (primary) 28A78 (secondary)}
\keywords{Projections, Furstenberg sets}
\thanks{T.O. is supported by the European Research Council (ERC) under the European Union's Horizon Europe research and innovation programme (grant agreement No 101087499), and by the Research Council of Finland via the project \emph{Approximate incidence geometry}, grant no. 355453.}
\thanks{P.S. is supported by an NSERC Discovery Grant.}

\begin{document}

\begin{abstract} 
We prove sharp $\delta$-discretised versions of some variants of the Furstenberg set problem under weaker or different non-concentration assumptions compared to previous works. 
 \end{abstract}

\maketitle

\tableofcontents

\section{Introduction and main results}

The study of projections and more general discretised incidence problems has seen many significant developments in recent years. Notably, the Furstenberg set conjecture was recently resolved by Ren and Wang \cite{RenWang23}, building up on our previous work \cite{OS23}. We next state the $\delta$-discretised version of this estimate. The original conjecture, due to Wolff \cite{Wolff99}, was stated in terms of Hausdorff dimension, but the $\delta$-discretised formulation is more powerful in applications, such as the ones in this paper. We refer to Section \ref{sec:preliminaries} for the definitions of the discretised objects (for example Frostman sets and configurations) appearing in the statement. Here we just informally mention that a \emph{$(\delta,s,C,M)$-configuration} $(\mathcal{P},\mathcal{T})$ is a pair of $\delta$-squares $\mathcal{P}$ and $\delta$-tubes $\mathcal{T}$ such that each square $p \in \mathcal{P}$ intersects all tubes in an $s$-dimensional sub-family $\mathcal{T}(p) \subset \mathcal{T}$ of cardinality $|\mathcal{T}(p)| = M$.

\begin{thm}[Furstenberg set estimate]\label{thm:FurstIntro} 
  Let $s \in (0,1]$, $t \in [0,2]$, and let
  \begin{displaymath} 
    \gamma < \min\left\{t,\tfrac{s + t}{2},1\right\}. 
  \end{displaymath} 
  Then, there exist $\epsilon = \epsilon(s,t,\gamma) > 0$ and $\delta_{0} = \delta_{0}(s,t,\gamma) > 0$ such that the following holds for all $\delta \in (0,\delta_{0}]$. Let $(\mathcal{P},\mathcal{T})$ be a $(\delta,s,\delta^{-\epsilon},M)$-nice configuration, where $\mathcal{P}$ is a Frostman $(\delta,t,\delta^{-\epsilon})$-set. Then,
\begin{displaymath} 
  |\mathcal{T}| \geq M\delta^{-\gamma}. 
\end{displaymath} 
Moreover, $\delta_0,\epsilon$ can be chosen uniform over all $(s,t,\gamma)$ in a fixed compact subset of $(0,1] \times [0,2] \times (0,\min\{t,\tfrac{s + t}{2},1\}))$. 
\end{thm}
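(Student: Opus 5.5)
The statement is the sharp $\delta$-discretised Furstenberg estimate, and I would not reprove it from scratch. The plan is to reduce it to the discretised results in the literature (Ren--Wang \cite{RenWang23}, building on \cite{OS23}) together with two elementary reductions: one to handle the regime where $\min\{t,\tfrac{s+t}{2},1\}$ is not $\tfrac{s+t}{2}$, and one compactness argument for uniformity. The value of the minimum splits into three cases: it equals $t$ when $t \le s$, equals $\tfrac{s+t}{2}$ when $s \le t \le 2-s$, and equals $1$ when $t \ge 2-s$.

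\emph{Case $t \le s$.} Here I would give the standard elementary double-counting argument. Count pairs $(p,p',T)$ with $p \neq p'$ in $\mathcal{P}$ and $T \in \mathcal{T}(p)\cap\mathcal{T}(p')$. Since $\mathcal{T}(p)$ is an $s$-dimensional family of directions through $p$, the number of tubes in $\mathcal{T}(p)$ also containing a fixed $p'$ at distance $r$ is at most $\lesssim \delta^{-\epsilon} M (\delta/r)^{s}$. Combining this with the Frostman $(\delta,t,\delta^{-\epsilon})$ property of $\mathcal{P}$ and $t\le s$, Cauchy--Schwarz gives $|\mathcal{T}| \gtrsim \delta^{O(\epsilon)} M |\mathcal{P}| \cdot \delta^{t}\log(1/\delta)^{-1}$. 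Then $|\mathcal{P}| \gtrsim \delta^{\epsilon - t}$ yields $|\mathcal{T}| \gtrsim M\delta^{-t+O(\epsilon)}$, which beats $M\delta^{-\gamma}$ for $\epsilon$ small.

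\emph{Case $s < t \le 2-s$.} This is the genuine Furstenberg bound, and I would invoke the discretised theorem of Ren--Wang for nice configurations with Frostman $\mathcal{P}$. This step is where the actual content lies, and the main obstacle is bookkeeping rather than mathematics. One has to check that the notion of $(\delta,s,C,M)$-nice configuration and of Frostman set used here matches theirs, possibly after a dyadic pigeonholing that refines $\mathcal{P}$ to a subset on which $|\mathcal{T}(p)|$ is constant. I would also check that the losses incurred are $\delta^{-O(\epsilon)}$, which can be absorbed into the gap between $\gamma$ and $\tfrac{s+t}{2}$.

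\emph{Case $t > 2-s$.} Here the target is $\gamma<1$. I would set $t' := 2-s+\eta$ with $\eta>0$ small, or simply $t'<t$ close to $2-s$ so that $\gamma < \tfrac{s+t'}{2}$. The key point is that a Frostman $(\delta,t,\delta^{-\epsilon})$-set is automatically a Frostman $(\delta,t',\delta^{-\epsilon})$-set after passing to a random subset of density $\delta^{t-t'}$. With high probability this random subset keeps the ball condition $|\mathcal{P}'\cap B(x,r)|\lesssim \delta^{-2\epsilon}r^{t'}|\mathcal{P}'|$ at all dyadic scales, by Chernoff and a union bound over $\delta^{-O(1)}$ balls. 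The restriction $(\mathcal{P}',\mathcal{T})$ is still a nice configuration, so the previous case applies with $t'$. Alternatively, since the numerology is monotone in $t$, I could simply quote the Ren--Wang statement, which already covers this range.

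\emph{Uniformity over compact sets.} Fix $(s_0,t_0,\gamma_0)$ in the admissible region. Any $(\delta,s,C)$-set with $s \ge s_0-\eta$ is a $(\delta,s_0-\eta,C)$-set, and similarly in $t$; the conclusion is monotone in $\gamma$. Hence the pair $(\epsilon,\delta_0)$ obtained for $(s_0-\eta,t_0-\eta,\gamma_0+\eta)$ works on a neighbourhood of $(s_0,t_0,\gamma_0)$. A finite subcover of the compact set then gives uniform $\epsilon,\delta_0$.
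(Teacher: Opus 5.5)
Your core step is the paper's whole argument. The paper gives no proof; it identifies the statement with \cite[Theorem 4.1]{RenWang23}, which already covers every $t\in[0,2]$, so your three-way case split is optional. Your compactness argument for the uniformity clause is correct and justifies something the paper asserts without proof.

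The optional parts contain two slips, both easy to fix.
\begin{itemize}
\item In the case $t\le s$, the displayed intermediate bound $M|\mathcal{P}|\delta^{t}$ is wrong: with $|\mathcal{P}|\gtrsim\delta^{\epsilon-t}$ it gives only $M\delta^{O(\epsilon)}$. Cauchy--Schwarz actually gives $|\mathcal{T}|\gtrsim\delta^{O(\epsilon)}M\delta^{-t}\log(1/\delta)^{-1}$ directly, because the factors $|\mathcal{P}|^{2}$ cancel. The lower bound $|\mathcal{P}|\gtrsim\delta^{\epsilon-t}$ is needed only to show that the diagonal terms $p=p'$ are dominated by the off-diagonal ones.
\item In the case $t>2-s$, your choice $t'=2-s+\eta$ stays in the same range; take $t'=2-s$ instead, which gives $\min\{t',\tfrac{s+t'}{2},1\}=1>\gamma$. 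No random subset is needed either. Since $r^{t}\le r^{t'}$ for $r\le1$, every Frostman $(\delta,t,C)$-set is already a Frostman $(\delta,t',C)$-set. This is the same monotonicity you use in your uniformity step.
\end{itemize}
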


Theorem \ref{thm:FurstIntro} is \cite[Theorem 4.1]{RenWang23}. Many variants of Theorem \ref{thm:FurstIntro} have been achieved, with important applications to the restriction conjecture and discretised sum-product problems, among others. See \cite{DemeterORegan25, DemeterWang25, WangWu24,WangWu26}. Each of these variants involves some relaxation of the non-concentration assumptions on the tubes $\mathcal{T}$ (often at the cost of some additional assumptions on the set $\mathcal{P}$). 

The main goal of this paper is to obtain versions of Theorem \ref{thm:FurstIntro} which are valid under weaker non-concentration assumptions on the set $\mathcal{P}$, but keeping a strong non-concentration assumption on the tube family $\mathcal{T}$. The first main result is Theorem \ref{thm:Furstenberg-minimal-non-concentration} below. The point of this result is to obtain an improvement over the "trivial" bound $|\mathcal{T}| \gtrapprox M|\mathcal{P}|^{1/2}$ (see for example \cite[Proposition 2.9]{MR4912925} for a precise statement) under the hypothesis that $\mathcal{P}$ is not concentrated in a single square of (minimal) side-length $\delta|\mathcal{P}|^{1/2}$.
\begin{thm}\label{thm:Furstenberg-minimal-non-concentration}
Fix
\begin{displaymath}
  s\in (0,1], \quad t\in [0,2], \quad u \in (0,\min\{t,2-t\}],
\end{displaymath}
and
\begin{displaymath} 
  0 < \zeta < \frac{t}{2} + \frac{s \cdot u}{2}.
\end{displaymath} 
Then, there exist $\e=\e(\zeta,s,t,u) > 0$ and $\delta_{0} = \delta_{0}(\zeta,s,t,u) > 0$ such that the following holds for all $\delta \in 2^{-\N} \cap (0,\delta_{0}]$. Let $(\mathcal{P},\mathcal{T})$ be a $(\delta,s,\delta^{-\e},M)$-nice configuration. Assume that  $|\mathcal{P}|=\delta^{-t}$ and
\begin{equation} \label{eq:single-scale-non-conc}
|\mathcal{P} \cap Q| \le \delta^u |\mathcal{P}|, \quad Q\in\mathcal{D}_{\delta|\mathcal{P}|^{1/2}}.
\end{equation}
Then
\begin{equation}\label{form95}
|\mathcal{T}| \ge M\cdot \delta^{-\zeta}. 
\end{equation}
\end{thm}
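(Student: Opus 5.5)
The plan is a two-scale argument at the critical scale $\Delta := \delta|\mathcal{P}|^{1/2} = \delta^{1 - t/2}$. On the two ranges $[\delta,\Delta]$ and $[\Delta,1]$ I will use the sharp Furstenberg bound, Theorem \ref{thm:FurstIntro}, and then multiply the two gains. The heuristic behind the target exponent $\tfrac{t}{2} + \tfrac{su}{2}$ is as follows. Below $\Delta$ the set $\mathcal{P}$ can be at most "full", since $(\Delta/\delta)^2 = |\mathcal{P}|$, and the trivial bound already yields the $\tfrac{t}{2}$. The assumption \eqref{eq:single-scale-non-conc} forces $|\mathcal{P}_\Delta| \geq \delta^{-u}$ many $\Delta$-squares. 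At the coarse scale, an $s$-dimensional family of tubes through $\gtrsim \delta^{-u}$ separated squares should produce an extra factor of about $(\delta^{-u})^{s/2}$. This is the "$(s+t)/2$ with $t \to 0$"-type gain, i.e.\ the elementary bound $|\mathcal{T}| \gtrsim M\,|\mathcal{P}|^{s/2}$ for $s$-dimensional tube families.

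\textbf{Step 1 (uniformisation).} I first pigeonhole $\mathcal{P}$ to a $\{\Delta_j\}$-uniform subset $\mathcal{P}'$ with $|\mathcal{P}'| \gtrapprox |\mathcal{P}|$, where the scales $\Delta_j$ include $\Delta$. I also refine $\mathcal{T}$ so that $(\mathcal{P}',\mathcal{T}')$ is still a nice configuration with $M' \gtrapprox M$. This loses only $\delta^{-O(\epsilon)}$ factors, and \eqref{eq:single-scale-non-conc} survives up to such factors. After this step $\mathcal{P}'_\Delta$ is a set of $N \gtrapprox \delta^{-u}$ squares, each containing $\approx |\mathcal{P}'|/N$ points of $\mathcal{P}'$.

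\textbf{Step 2 (two-scale decomposition).} Following the standard induction-on-scales scheme from \cite{OS23}, I thicken the tubes to $\Delta$-tubes $\mathcal{T}_\Delta$. This gives a coarse configuration $(\mathcal{P}'_\Delta, \mathcal{T}_\Delta)$ which is $(\Delta,s,\cdot,M_\Delta)$-nice, because the $s$-dimensional non-concentration of $\mathcal{T}(p)$ passes to scale $\Delta$. For each $Q \in \mathcal{P}'_\Delta$, rescaling $Q$ to $[0,1]^2$ gives a fine configuration at scale $\delta/\Delta$ with $M_Q \approx M/M_\Delta$. The usual product estimate then reads
\begin{displaymath}
|\mathcal{T}| \gtrapprox \frac{|\mathcal{T}_\Delta|}{M_\Delta}\cdot \frac{|\mathcal{T}_Q|}{M_Q} \cdot M.
\end{displaymath}
For the fine factor, $\mathcal{P}' \cap Q$ rescaled has $|\mathcal{P}'|/N$ squares out of a possible $(\Delta/\delta)^2$, and I apply the trivial bound $|\mathcal{T}_Q| \gtrapprox M_Q (|\mathcal{P}'|/N)^{1/2}$. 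For the coarse factor, I apply the elementary bound via the $L^2$ count of pairs $(p,p')$ on a common tube. Here the $s$-dimensional spread at $p$ gives $|\mathcal{T}(p)\cap\mathcal{T}(p')| \lessapprox M_\Delta (\Delta/|p-p'|)^s$. Provided the coarse squares are spread out, this yields $|\mathcal{T}_\Delta| \gtrapprox M_\Delta N^{s/2}$. Where uniformity provides more Frostman structure at intermediate scales, I instead use Theorem \ref{thm:FurstIntro} there. Multiplying the two factors gives
\begin{displaymath}
|\mathcal{T}| \gtrapprox M (|\mathcal{P}|/N)^{1/2} N^{s/2} = M\,|\mathcal{P}|^{1/2} N^{-(1-s)/2},
\end{displaymath}
which is the wrong direction in $N$. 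So the naive split is not enough, and the correct bookkeeping must instead use the branching function of $\mathcal{P}'$.

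\textbf{Step 3 (main obstacle).} The real difficulty is to combine the scales so that the non-concentration at the single scale $\Delta$ is converted into a genuine gain $\delta^{-su/2}$ over $|\mathcal{P}|^{1/2}$. I would write the branching function $\beta$ of $\mathcal{P}'$ on $[0,1]$, normalised so that $\beta(1) = t$ and $\beta(1-t/2) \leq t - u$, and split $[0,1]$ into intervals on which $\beta$ is close to linear. On each interval I would apply Theorem \ref{thm:FurstIntro}, with gain $\min\{\tau,(s+\tau)/2,1\}$ for local slope $\tau$, or the trivial $\tau/2$ bound when $\tau$ is large. Summing the gains is then a piecewise-linear optimisation over $\beta$ subject to $0 \leq \beta' \leq 2$ and the constraint at $1-t/2$. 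The hypothesis $u \leq \min\{t,2-t\}$ should make the extremal $\beta$ admissible, with slope $2$ on $[1-t/2,1]$ (full below $\Delta$) and total mass $u$ above $\Delta$. I expect the extremiser to give exactly $\tfrac{t}{2} + \tfrac{su}{2}$, and showing this lower bound is where the real work lies. Strict inequality $\zeta < \tfrac{t}{2}+\tfrac{su}{2}$ absorbs the $\delta^{-O(\epsilon)}$ losses, and compactness gives uniformity of $\epsilon$ and $\delta_0$.
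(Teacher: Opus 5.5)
There is a genuine gap, and it lies in the step you defer (``where the real work lies''). As you set up Step 3, it cannot be closed. You split $[0,1]$ into blocks on which $\beta$ is close to \emph{linear}, and credit each block with its length times $\phi(\tau):=\min\{\tau,\tfrac{s+\tau}{2},1\}$, where $\tau$ is the local slope. But $\phi(\tau)\geq\tau/2$, with equality exactly at $\tau\in\{0,2\}$. So for any $\beta$ with slopes in $\{0,2\}$ this bookkeeping returns only the trivial exponent $\tfrac{t}{2}$, and such $\beta$ are compatible with \eqref{eq:single-scale-non-conc}. Concretely, let $\beta$ have slope $0$ on $[0,1-\tfrac{t}{2}-\tfrac{u}{2}]$, slope $2$ on $[1-\tfrac{t}{2}-\tfrac{u}{2},1-\tfrac{t}{2}]$, slope $0$ on $[1-\tfrac{t}{2},1-\tfrac{t}{2}+\tfrac{u}{2}]$, and slope $2$ on $[1-\tfrac{t}{2}+\tfrac{u}{2},1]$. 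This is admissible because $u\leq\min\{t,2-t\}$; it is the branching function of the sharpness example in \S\ref{subsec:sharpness1}, with $\beta(1)=t$ and $\beta(1-\tfrac{t}{2})=u$. Your sum of gains is $\tfrac{u}{2}+(\tfrac{t}{2}-\tfrac{u}{2})=\tfrac{t}{2}$, so the optimisation you anticipate does not give $\tfrac{t}{2}+\tfrac{su}{2}$.

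The missing idea is that Theorem \ref{thm:FurstIntro} only needs each block to be Frostman, which corresponds to $\beta$ being \emph{superlinear} on the block, not linear. Since $\phi$ is concave, coarser Frostman blocks gain more. In the example, $\beta$ is $(1,0)$-superlinear on the whole middle range $[1-\tfrac{t}{2}-\tfrac{u}{2},1-\tfrac{t}{2}+\tfrac{u}{2}]$. That range is therefore a single $1$-dimensional Frostman block, on which Theorem \ref{thm:FurstIntro} gains $\tfrac{1+s}{2}u$ rather than $\tfrac{u}{2}$ --- exactly the missing $\tfrac{su}{2}$. This is what Lemma \ref{l:combinatorial-weak} supplies in the paper: blocks of length $\geq\tau m$ on which $\beta$ is $(t_j,0)$-superlinear, with $t_j$ increasing and $\sum_j(a_{j+1}-a_j)t_j\geq\beta(m)-\xi m$. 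These blocks are then glued together via Corollary \ref{cor:ShWa}.

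Beyond this, the lower bound itself is never proved, and your description of the constraint is wrong. Hypothesis \eqref{eq:single-scale-non-conc} gives $\beta(1-\tfrac{t}{2})\geq u$ (at least $\delta^{-u}$ squares of side $\delta^{1-t/2}$), not $\beta(1-\tfrac{t}{2})\leq t-u$. Your proposed extremiser, with slope $2$ on $[1-\tfrac{t}{2},1]$, would force $\beta(1-\tfrac{t}{2})=0$, so it is inadmissible.

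In the paper, monotonicity of the $t_j$ is what makes the optimisation tractable. The blocks with $t_j\leq s$, $t_j\in(s,2-s)$ and $t_j\geq 2-s$ fill consecutive ranges $[0,A_1]$, $[A_1,A_2]$ and $[A_2,m]$. So the total gain reduces to $\delta^{-t/2}$ times the bracketed factor in \eqref{form135}, whose exponent depends only on $A_1,A_2,\beta(A_1),\beta(A_2)$. One then shows this factor is at least $\Delta^{-\frac{s}{2}\beta(A)}$ with $A=(1-\tfrac{t}{2})m$, by cases on the position of $A$ relative to $A_1,A_2$. The case analysis uses only monotonicity and the $2$-Lipschitz property of $\beta$; in the main case $A_1\leq A\leq A_2$ these enter as $A_2-A\leq\tfrac{1}{2}\beta(A_2)$ and $\beta(A)-\beta(A_1)\leq 2(A-A_1)$. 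Your two-scale Step 2 is not needed.
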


It is well known that discretised Furstenberg set estimates imply discretised projection estimates. In short, for an $s$-Frostman set $\Theta \subset S^{1}$ of directions, one considers for each $p\in\mathcal{P}$ the tubes $\mathcal{T}(p)$ through $p$ in directions $\theta\in\Theta$. Then a Furstenberg set estimate applied to the $(\delta,s,C,|\Theta|)$-nice configuration $(\mathcal{P},\bigcup_{p} \mathcal{T}(p))$ implies a projection estimate. With this argument, Theorem \ref{thm:Furstenberg-minimal-non-concentration} implies the following projection estimate under minimal non-concentration assumptions:
\begin{cor} \label{cor:projDeltaDiscretisedSingleScale} 
  Fix $s,t,u$, and $\zeta$ as in Theorem \ref{thm:Furstenberg-minimal-non-concentration}. Then,  there exist $\epsilon = \epsilon(\zeta,s,t,u) > 0$ and $\delta_{0} = \delta_{0}(\zeta,s,t,u) > 0$ such that the following holds for all $\delta \in 2^{-\N} \cap (0,\delta_{0}]$. 

Assume that $\mathcal{P} \subset \mathcal{D}_{\delta}$ has $|\mathcal{P}|_{\delta}=\delta^{-t}$, and satisfies \eqref{eq:single-scale-non-conc}. Let $\Theta \subset S^{1}$ be a non-empty Frostman $(\delta,s,\delta^{-\epsilon})$-set. Then, there exists $\theta \in \Theta$ such that
\begin{equation}\label{form126}
|\pi_{\theta}(\mathcal{P}')|_{\delta} \geq \delta^{-\zeta}, \qquad \mathcal{P}' \subset \mathcal{P}, \, |\mathcal{P}'| \geq \delta^{\epsilon}|\mathcal{P}|.
\end{equation}
Here $\pi_{\theta}(x) := x \cdot \theta$ for $x \in \R^{2}$.
\end{cor}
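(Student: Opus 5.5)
We argue by contradiction, feeding a hypothetical counterexample into Theorem \ref{thm:Furstenberg-minimal-non-concentration}. Fix $\zeta,s,t,u$, and let $\epsilon_0 = \epsilon(\zeta',s,t,u)$ be the constant from Theorem \ref{thm:Furstenberg-minimal-non-concentration}, applied with a slightly larger exponent $\zeta' \in (\zeta, \tfrac{t}{2} + \tfrac{su}{2})$. The gap $\zeta'-\zeta$ is there to absorb the losses below. We then choose $\epsilon \ll \epsilon_0$.

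Suppose the conclusion fails. Then for every $\theta \in \Theta$ there is a subset $\mathcal{P}_\theta \subset \mathcal{P}$ with $|\mathcal{P}_\theta| \geq \delta^{\epsilon}|\mathcal{P}|$ and $|\pi_\theta(\mathcal{P}_\theta)|_\delta < \delta^{-\zeta}$. For each $\theta$, cover $\pi_\theta(\mathcal{P}_\theta)$ by $\lesssim \delta^{-\zeta}$ intervals of length $\delta$. Their preimages are $\lesssim \delta^{-\zeta}$ tubes $\mathcal{T}_\theta$ of width $\sim\delta$ in direction $\theta^\perp$, and they cover $\mathcal{P}_\theta$. For $p \in \mathcal{P}$ set
\begin{displaymath}
  \Theta(p) = \{\theta \in \Theta : p \in \mathcal{P}_\theta\}.
\end{displaymath}
By double counting, $\sum_p |\Theta(p)| \geq \delta^{\epsilon}|\mathcal{P}||\Theta|$. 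Since $|\Theta(p)| \leq |\Theta|$ for every $p$, the set $\mathcal{P}' = \{p : |\Theta(p)| \geq \tfrac12\delta^{\epsilon}|\Theta|\}$ satisfies $|\mathcal{P}'| \gtrsim \delta^{\epsilon}|\mathcal{P}|$. Moreover, each $\Theta(p)$ with $p \in \mathcal{P}'$ is a Frostman $(\delta,s,\delta^{-2\epsilon})$-set, being a large subset of a Frostman set.

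Next we build a nice configuration. Apply pigeonholing: by a standard dyadic argument, refine $\Theta(p)$ to subsets of a common cardinality $M \approx \delta^{\epsilon}|\Theta|$, and refine $\mathcal{P}'$ accordingly. For $p \in \mathcal{P}'$ let $\mathcal{T}(p)$ consist of one tube of $\mathcal{T}_\theta$ through $p$ for each $\theta \in \Theta(p)$. Distinct directions give essentially distinct tubes, with $O(1)$ overlaps, so $\mathcal{T}(p)$ is a $(\delta,s,\delta^{-2\epsilon})$-set of tubes through $p$ of cardinality $M$. This makes $(\mathcal{P}', \bigcup_p \mathcal{T}(p))$ a $(\delta,s,\delta^{-O(\epsilon)},M)$-nice configuration, after the usual replacement of the tubes by dyadic $\delta$-tubes with constant losses.

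We must also check the hypotheses on $\mathcal{P}'$. We have $|\mathcal{P}'| \geq \delta^{O(\epsilon)}|\mathcal{P}|$, and
\begin{displaymath}
  |\mathcal{P}' \cap Q| \leq \delta^{u}|\mathcal{P}| \leq \delta^{u - O(\epsilon)}|\mathcal{P}'|.
\end{displaymath}
The side-length $\delta|\mathcal{P}'|^{1/2}$ differs from $\delta|\mathcal{P}|^{1/2}$ only by a factor $\delta^{-O(\epsilon)}$, so each square in $\mathcal{D}_{\delta|\mathcal{P}'|^{1/2}}$ is covered by $\delta^{-O(\epsilon)}$ squares of the original scale. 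This only changes $u$ to $u - O(\epsilon)$.

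The exact normalisation $|\mathcal{P}'| = \delta^{-t}$ is handled by applying the theorem with $t' = t - O(\epsilon)$ and $u' = u - O(\epsilon)$. For this we need the uniformity of $\epsilon_0$ in the parameters, or else we fix $\zeta'$ with margin so that $\zeta' < \tfrac{t'}{2} + \tfrac{su'}{2}$. Theorem \ref{thm:Furstenberg-minimal-non-concentration} then gives
\begin{displaymath}
  |\mathcal{T}| \geq M\delta^{-\zeta'} \gtrsim \delta^{\epsilon}|\Theta|\,\delta^{-\zeta'}.
\end{displaymath}
On the other hand, $|\mathcal{T}| \leq \sum_\theta |\mathcal{T}_\theta| \lesssim |\Theta|\delta^{-\zeta}$. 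Comparing the two bounds gives $\delta^{-\zeta'+\epsilon} \lesssim \delta^{-\zeta}$, which is a contradiction once $\epsilon < \zeta'-\zeta$ and $\delta$ is small.

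We expect the main obstacle to be bookkeeping rather than any single inequality. Two points need care. First, after refinement the tube family must still have each $\mathcal{T}(p)$ genuinely $s$-dimensional, with multiplicity from nearby directions controlled. Second, the exact cardinality and non-concentration hypotheses of Theorem \ref{thm:Furstenberg-minimal-non-concentration} must hold for $\mathcal{P}'$. The second point needs either uniformity of the constants over compact parameter ranges or a small perturbation of $(t,u,\zeta)$, as described above.
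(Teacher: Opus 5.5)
Your proposal is correct. It is the standard Furstenberg-to-projection reduction that the paper only sketches before stating the corollary: tubes through each $p$ in the directions of $\Theta$, then Theorem~\ref{thm:Furstenberg-minimal-non-concentration} applied to the resulting nice configuration, with your double counting to $\Theta(p)$ and $\mathcal{P}'$ supplying the details needed to handle arbitrary large subsets. The one step to tighten is the normalisation: fixing $\zeta'$ with a margin does not by itself control how $\epsilon_0,\delta_0$ depend on the data-dependent pair $(t',u')$, so you genuinely need the uniformity. That uniformity does hold, because the proof of Theorem~\ref{thm:Furstenberg-minimal-non-concentration} chooses all its constants in terms of $s$ and $\omega = \tfrac{t}{2} + \tfrac{su}{2} - \zeta$ alone, and your margin keeps $\omega$ bounded below.
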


The single-scale non-concentration assumption \eqref{eq:single-scale-non-conc} first appeared in \cite{Shmerkin23}, where the second author proved a non-quantitative version of Corollary \ref{cor:projDeltaDiscretisedSingleScale}. We note that in Theorems \ref{thm:FurstIntro} and \ref{thm:Furstenberg-minimal-non-concentration}, and in Corollary \ref{cor:projDeltaDiscretisedSingleScale}, the cases $t \in \{0,2\}$ are easily checked to be trivial, but it is convenient to include them in the statement. 

Theorem \ref{thm:Furstenberg-minimal-non-concentration} and Corollary \ref{cor:projDeltaDiscretisedSingleScale} are sharp; this will be shown in \S \ref{subsec:sharpness1}.

We then move to the second main theorem of the paper, where the "minimal" non-concentration \eqref{eq:single-scale-non-conc} on $\mathcal{P}$ is upgraded to a \emph{Katz-Tao} non-concentration hypothesis (see Definition \ref{FrostmanKatzTao}).

\begin{thm}\label{thm1} Let $s \in (0,1]$, $t \in [s,2 - s]$, and $\eta > 0$. Then there exist $\epsilon = \epsilon(s,t,\eta) > 0$ and $\delta_{0} = \delta_{0}(s,t,\eta) > 0$ such that the following holds for all $\delta \in (0,\delta_{0}]$. Let $\mathcal{P} \subset \mathcal{D}_{\delta}$ be a Katz-Tao $(\delta,t,\delta^{-\epsilon})$-set, and let $(\mathcal{P},\mathcal{T})$ be a $(\delta,s,\delta^{-\epsilon},M)$-nice configuration. Then,
\begin{displaymath} 
  |\mathcal{T}| \geq \delta^{\eta}M|\mathcal{P}|^{(s + t)/(2t)}.
\end{displaymath} 
\end{thm}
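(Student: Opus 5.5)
We reduce Theorem \ref{thm1} to the Furstenberg set estimate, Theorem \ref{thm:FurstIntro}, by a multi-scale decomposition of the Katz-Tao set $\mathcal{P}$. Write $|\mathcal{P}| = \delta^{-\tau}$ with $\tau \le t$; Katz-Tao $(\delta,t)$ only gives $\tau\le t$ up to $\delta^{-\epsilon}$. The target bound is $|\mathcal{T}| \gtrsim M\delta^{-\tau(s+t)/(2t)}$.

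\emph{Heuristic scale.} If $\mathcal{P}$ were an honest $t$-dimensional set at every scale, it would fill a ball of radius $\Delta$ with $\Delta^{t}=\delta^{t}|\mathcal{P}| = \delta^{t-\tau}$, that is, $\Delta = \delta^{1-\tau/t}$. Rescaled to the unit square, $\mathcal{P}$ would then be a Frostman $(\delta/\Delta, t)$-set. Theorem \ref{thm:FurstIntro} at scale $\delta/\Delta$ with $\gamma$ close to $(s+t)/2$ gives about $M(\Delta/\delta)^{(s+t)/2} = M\delta^{-\tau(s+t)/(2t)}$ tubes. Here $(s+t)/2\le 1$ and $(s+t)/2 \le t$ hold because $t\in[s,2-s]$.

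\emph{General case.} We apply the standard uniformisation (pigeonholing to a $\{\Delta_j\}$-uniform subset $\mathcal{P}'\subset\mathcal{P}$ with $|\mathcal{P}'|\gtrapprox|\mathcal{P}|$) and extract the branching function $f$ of $\mathcal{P}'$. Here $f(x)$ is the log of the number of $\delta^{x}$-squares, so $f(1)=\tau$ and $f$ is $2$-Lipschitz. The Katz-Tao condition gives $f(1)-f(x)\le t(1-x)+\epsilon$. By the Lipschitz-function decomposition lemmas of \cite{OS23}, we split $[0,1]$ into intervals $[a_j,a_{j+1}]$ on which $f$ is roughly linear with slope $t_j$ and $\mathcal{P}'$ is Frostman at the rescaled scale. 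On each interval the configuration induced inside a square $Q\in\mathcal{D}_{\delta^{a_j}}$, rescaled and with tubes of width $\delta^{a_{j+1}-a_j}$, is again nice with non-concentration exponent $s$. Theorem \ref{thm:FurstIntro} then gives a gain of $\min\{t_j,(s+t_j)/2,1\}$ per unit length.

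We combine these gains through the multiscale incidence inequality $|\mathcal{T}|/M \gtrapprox \prod_j (\text{gain}_j)$, as in the Furstenberg-set induction of \cite{OS23}. On the initial interval $[0,a_0]$ where $f$ is nearly flat, the gain is only the trivial $0$. This reduces the problem to the optimisation
\[
\int_0^1 \min\{f'(x),\tfrac{s+f'(x)}{2},1\}\,dx \;\ge\; \tfrac{s+t}{2t} f(1) - \eta
\]
over $2$-Lipschitz nondecreasing $f$ with $f(0)=0$, $f(1)=\tau$ and $f(1)-f(x)\le t(1-x)$.

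\emph{Main obstacle.} The key step is this optimisation together with making the scale-by-scale Furstenberg bounds compose. The function $\phi(\sigma)=\min\{\sigma,(s+\sigma)/2,1\}$ is concave with $\phi(0)=0$. Hence $\phi(\sigma)\ge \frac{\phi(t)}{t}\sigma = \frac{s+t}{2t}\sigma$ for $\sigma\le t$, using $\phi(t)=(s+t)/2$ because $s\le t\le 2-s$. For slopes $\sigma>t$ concavity fails to help directly. There we use the Katz-Tao constraint: mass at slopes above $t$ must be compensated by flat stretches earlier, and we handle this with a rearrangement or convexity argument on $f$, moving steep parts to the end.

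Converting the integral bound into an honest tube count requires the induction-on-scales lemma, which states that tubes in $Q$ at scale $\delta^{a_{j+1}}$ times tubes at scale $\delta^{a_j}$ control $|\mathcal{T}|$. This needs the $s$-dimensional nice property of $\mathcal{T}(p)$ to be inherited at every intermediate scale. Only polynomially many intervals (depending on $\eta$) are used, so the $\delta^{-\epsilon}$ losses remain below $\delta^{-\eta}$.
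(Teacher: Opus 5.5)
\textbf{There is a genuine gap.} The overall architecture is the paper's: uniformise, decompose the branching function, apply Theorem \ref{thm:FurstIntro} on each block, and combine the blocks via Corollary \ref{cor:ShWa}. The problem is that you split $f$ into \emph{roughly linear} blocks, and the optimisation you reduce to is false. With $\phi(\sigma)=\min\{\sigma,\frac{s+\sigma}{2},1\}$, take $f(x)=\min\{2x,t\}$. This is the branching function of a $\delta^{t/2}$-separated grid with $\delta^{-t}$ points, which is a Katz--Tao $(\delta,t)$-set. It satisfies $f(1)-f(x)\le t(1-x)$, yet
\[
\int_0^1\phi(f'(x))\,dx=\tfrac{t}{2}\,\phi(2)=\tfrac{t}{2}<\tfrac{s+t}{2}=\tfrac{s+t}{2t}f(1).
\]
So the linear-block bound is valid but loses a fixed amount $s/2$ here. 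No rearrangement can repair it, because $\int\phi(f')$ is invariant under rearranging $f'$. Moreover, the Katz--Tao constraint bounds the average slope on every \emph{terminal} segment $[x,1]$. So steep stretches must be followed by flat ones, not preceded by them, and moving steep parts to the end would violate the constraint. For this grid the correct treatment is a single block: the set is Frostman $(\delta,t)$, and Theorem \ref{thm:FurstIntro} gives $\delta^{-(s+t)/2}$ directly.

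\textbf{The missing idea} is to use Lemma \ref{l:combinatorial-weak} as stated. It gives blocks on which $(f,a_j,a_{j+1})$ is $(t_j,0)$-\emph{superlinear}, which is exactly what gives the Frostman $t_j$ property on each block. It also gives $t_0<t_1<\dots<t_{n-1}$ and $\sum_j(a_{j+1}-a_j)t_j\ge f(1)-\zeta$. First pass to a Katz--Tao $(\delta,t,1)$ subset of comparable size. Then superlinearity on the last block, evaluated at $x=1$, together with Katz--Tao gives
\[
t_{n-1}(1-a_{n-1})\le f(1)-f(a_{n-1})\le t(1-a_{n-1}).
\]
Since the $t_j$ increase, $t_j\le t$ for \emph{all} $j$, so slopes above $t$ never occur. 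Your concavity observation $\phi(\sigma)\ge\frac{s+t}{2t}\sigma$ for $\sigma\le t$ then finishes the proof at once:
\[
\sum_j\phi(t_j)(a_{j+1}-a_j)\ge\tfrac{s+t}{2t}\bigl(f(1)-\zeta\bigr).
\]
This finish is slightly slicker than the paper's. The paper instead splits the blocks into those with $t_j\le s$ and those with $t_j\in(s,2-s]$, and invokes Katz--Tao once more at the scale separating the two groups.
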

Theorem \ref{thm1} is also sharp; see \S \ref{subsec:sharpness2}. The sharpness examples have the form $P = A \times A$, where $A \subset \delta \Z \cap [0,1]$ is a Katz-Tao $(\delta,t/2)$-set. Is Theorem \ref{thm1} also sharp for product sets $P = A \times B$, where $|A| \neq |B|$? Not always: even the trivial bound $|\pi_{\theta}(P)|_{\delta} \gtrsim |A|$ may sometimes beat the bound $(|A||B|)^{(1 + s)/2}$ suggested by applying Theorem \ref{thm1} directly to $P=A\times B$. Motivated by this observation, we present a variant of Theorem \ref{thm1} for (generally) asymmetric product sets. This comes at negligible additional cost, since the proof of Theorem \ref{thm2} is almost the same as the proof of Theorem \ref{thm1}.

\begin{thm}\label{thm2} Let $s \in (0,1]$, let $\alpha,\beta \in (0,1]$ with $\alpha + \beta \in [s,2 - s]$, and let $\eta > 0$. Then there exist $\epsilon = \epsilon(s,\alpha,\beta,\eta) > 0$ and $\delta_{0} = \delta_{0}(s,\alpha,\beta,\eta) > 0$ such that the following holds for all $\delta \in (0,\delta_{0}]$. 
\begin{itemize}
\item Let $A \subset \delta\Z \cap [0,1]$ be a Katz-Tao $(\delta,\alpha,\delta^{-\epsilon})$-set.
\item Let $B \subset \delta \Z \cap [0,1]$ be a Katz-Tao $(\delta,\beta,\delta^{-\epsilon})$-set.
\end{itemize}
Assume that $(A \times B,\mathcal{T})$ is a $(\delta,s,\delta^{-\epsilon},M)$-nice configuration, and $|A|^{\beta} \geq |B|^{\alpha}$. Then,
\begin{equation}\label{form14}
|\mathcal{T}| \geq \delta^{\eta} M\begin{cases} |A||B|^{(\beta + s - \alpha)/(2\beta)}, & \alpha \leq s, \\ |A|^{(\alpha + s)/(2\alpha)}|B|^{1/2}, & \alpha \geq s. \end{cases}
\end{equation}  
%\begin{displaymath} |\mathcal{T}| \geq \delta^{\eta}M|A|^{(\alpha + \theta s)/(2\alpha)}|B|^{(\beta + (1 - \theta)s)/(2\beta)} \end{displaymath} 
%for every $\theta \in [0,1]$ satisfying $\max\{\theta s/\alpha,(1 - \theta) s/\beta\} \leq 1$.
\end{thm}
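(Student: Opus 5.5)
The plan is to mirror the proof of Theorem \ref{thm1}, which we may take as known. We reduce to the Furstenberg estimate, Theorem \ref{thm:FurstIntro}, at a well-chosen intermediate scale $\Delta \in [\delta,1]$ that makes $A \times B$ look like a Frostman set. The product structure lets us pick $\Delta$ explicitly.

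First, normalise. Write $|A| = \delta^{-a}$ and $|B| = \delta^{-b}$. The Katz-Tao hypotheses give $a \lessapprox \alpha$ and $b \lessapprox \beta$, and $|A|^{\beta} \geq |B|^{\alpha}$ means $a/\alpha \geq b/\beta$. Using standard pigeonholing (uniformisation of $A$ and $B$ along a branching sequence, losing only $\delta^{-O(\epsilon)}$ factors), we may assume $A$ and $B$ are $\{\Delta_j\}$-uniform.

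The key observation is that a Katz-Tao $(\delta,\alpha)$-set of cardinality $\delta^{-a}$ is, at scales $r \geq \rho_A := |A|^{-1/\alpha} = \delta^{a/\alpha}$, essentially "full". Precisely, each $r$-interval contains at most $(r/\delta)^{\alpha}$ points. Hence above scale $\rho_A$ the set looks like a Frostman set of dimension comparable to $\alpha$, and below it we lose nothing.

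In the case $\alpha \geq s$ the target bound is $|A|^{(\alpha+s)/(2\alpha)}|B|^{1/2}$. I would split $A \times B$ into rectangles. Cover by $\rho$-squares with $\rho := |A|^{-1/\alpha} \geq |B|^{-1/\beta}$. Inside each $\rho$-square, $B$ has at most $(\rho/\delta)^{\beta}$ points and $A$ at most $(\rho/\delta)^{\alpha}$ points. We then run the standard two-scale argument: a Furstenberg estimate at the coarse scale for the projected configuration, and an estimate inside each square after rescaling. At the coarse scale the relevant set has Frostman dimension $t = \alpha + \beta'$ for the appropriate portion. Theorem \ref{thm:FurstIntro} then gives exponent $\min\{t,(s+t)/2,1\}$ in terms of $\rho^{-1}$, and combining the scales multiplicatively yields $M|A|^{(\alpha+s)/(2\alpha)}|B|^{1/2}$. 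The hypothesis $\alpha+\beta \leq 2-s$ guarantees that $(s+t)/2$ is the active minimum. In the case $\alpha \leq s$ the tubes through each point already separate $A$ fully, which gives the factor $|A|$. The remaining gain $|B|^{(\beta+s-\alpha)/(2\beta)}$ comes from applying the Furstenberg estimate with effective dimension $s-\alpha$ inside fibres of $B$.

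The main obstacle is the coarse-scale step. Uniformity of $A$ and $B$ separately does not directly give a Frostman condition for $A\times B$ at intermediate scales, because the branching of $A$ and $B$ may occur at different scales. I would handle this as in the proof of Theorem \ref{thm1}. We decompose $[\delta,1]$ into scale blocks on which the branching function of $A \times B$ is either "Frostman-like" or "Katz-Tao saturated". We apply Theorem \ref{thm:FurstIntro} with the local dimensions on each block, using induction on scales with the multiscale Furstenberg bound. Finally, we verify by an elementary convex-optimisation (piecewise-linear) computation that the worst case over admissible branching functions is exactly \eqref{form14}. The assumption $|A|^{\beta}\ge|B|^{\alpha}$ is what picks out the case split at $\alpha = s$ in that optimisation.
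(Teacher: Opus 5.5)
\textbf{Verdict: genuine gap.} Your last paragraph has the paper's skeleton, but the decisive step is left undone, and the concrete claims before it are false or unsupported.

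\textbf{The earlier claims.} The ``key observation'' is false. The Katz-Tao condition only caps concentration; it forces no spreading at coarse scales. For instance, $A$ may lie in a single interval of length $\delta|A|^{1/\alpha}$ (this, not $|A|^{-1/\alpha}$, is the natural scale; compare \S\ref{rem:sharpness-thm2}). If moreover $|A|\le\delta^{-\alpha/2}$, then $A$ meets only $O(1)$ intervals of $\mathcal{D}_{r}$ for every $r\ge|A|^{-1/\alpha}$, so it is $0$-dimensional there, not $\alpha$-dimensional. Also, $|A|^{\beta}\ge|B|^{\alpha}$ gives $|A|^{-1/\alpha}\le|B|^{-1/\beta}$, the reverse of what you wrote. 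The per-case two-scale arguments (a coarse Frostman dimension ``$\alpha+\beta'$'', an ``effective dimension $s-\alpha$ inside fibres of $B$'') have no mechanism and no computation producing the exponents in \eqref{form14}.

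\textbf{What survives.} Your last paragraph matches the paper's outline:
uniformise $A$ and $B$ separately, so that $P:=A'\times B'$ is uniform with $|P|_{r}=|A'|_{r}|B'|_{r}$; apply Lemma~\ref{l:combinatorial-weak} to the branching function of $P$ (all slopes satisfy $t_{j}\le\alpha+\beta\le 2-s$ by the Katz-Tao property); apply Theorem~\ref{thm:FurstIntro} blockwise; and combine via Corollary~\ref{cor:ShWa}. This gives
\[
\frac{|\mathcal{T}|}{M}\gtrapprox_{\delta}|P|_{\Delta^{\mathfrak{a}}}\,|P|_{\Delta^{\mathfrak{a}}\to\delta}^{1/2}\Big(\frac{\Delta^{\mathfrak{a}}}{\delta}\Big)^{s/2},
\]
where $\Delta^{\mathfrak{a}}$ separates the blocks with $t_{j}\le s$ from those with $t_{j}>s$.

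\textbf{The gap.} You defer the rest to ``an elementary convex optimisation over admissible branching functions'' without saying what ``admissible'' means. This is exactly where Theorem~\ref{thm2} goes beyond Theorem~\ref{thm1}. If you only use that $A\times B$ is a Katz-Tao $(\delta,\alpha+\beta)$-set, the optimisation returns Theorem~\ref{thm1}'s bound $(|A||B|)^{(s+\alpha+\beta)/(2(\alpha+\beta))}$. That is strictly smaller than \eqref{form14} whenever $|A|^{\beta}>|B|^{\alpha}$ and $\alpha+\beta>s$.

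The missing idea is to split the gain using the \emph{separate} Katz-Tao bounds $|A|_{\Delta^{\mathfrak{a}}\to\delta}\le(\Delta^{\mathfrak{a}}/\delta)^{\alpha}$ and $|B|_{\Delta^{\mathfrak{a}}\to\delta}\le(\Delta^{\mathfrak{a}}/\delta)^{\beta}$. For $\theta\in[0,1]$ with $\theta s\le\alpha$ and $(1-\theta)s\le\beta$,
\[
\Big(\frac{\Delta^{\mathfrak{a}}}{\delta}\Big)^{s/2}\ge|A|_{\Delta^{\mathfrak{a}}\to\delta}^{\theta s/(2\alpha)}\,|B|_{\Delta^{\mathfrak{a}}\to\delta}^{(1-\theta)s/(2\beta)}.
\]
Since $\tfrac12+\theta s/(2\alpha)\le 1$ and $\tfrac12+(1-\theta)s/(2\beta)\le 1$, the coarse counts $|A|_{\Delta^{\mathfrak{a}}}$ and $|B|_{\Delta^{\mathfrak{a}}}$ can be lowered to the same powers. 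This gives
\[
\frac{|\mathcal{T}|}{M}\gtrapprox_{\delta}(|A||B|)^{1/2}|B|^{s/(2\beta)}\big(|A|^{s/(2\alpha)}|B|^{-s/(2\beta)}\big)^{\theta}.
\]
Taking $\theta=\min\{1,\alpha/s\}$ yields \eqref{form14}; the constraint $(1-\theta)s\le\beta$ at $\theta=\alpha/s$ is where $s\le\alpha+\beta$ is used.

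Your linear-programming route can be completed: it does return \eqref{form14} once you impose $f_{P}=f_{A}+f_{B}$ on the branching functions, with $f_{A}(m)-f_{A}(x)\le\alpha(m-x)$ and $f_{B}(m)-f_{B}(x)\le\beta(m-x)$. The $\theta$-splitting is a dual certificate for that program.

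Finally, the case split at $\alpha=s$ comes from the constraint $\theta s\le\alpha$, not from $|A|^{\beta}\ge|B|^{\alpha}$. That hypothesis only tells you to maximise $\theta$, and it is not formally needed.
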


%\begin{remark} One can optimise the choice of "$\theta$" in Theorem \ref{thm2} in terms of the data $|A|,|B|,\alpha,\beta$. To do this, first express the lower bound for $|\mathcal{T}|$ in the form
%\begin{displaymath} M(|A||B|)^{1/2}|B|^{s/(2\beta)} \cdot (|A|^{s/(2\alpha)}|B|^{-s/(2\beta)})^{\theta}. \end{displaymath}  
%Now, if $|A|^{\beta} \leq |B|^{\alpha}$, it is desirable to minimise $\theta$, and in the opposite case it is desirable to maximise $\theta$. One just needs to take into account the constraints $\theta \in [0,1]$ and $\max\{\theta s/\alpha,(1 - \theta)s/\beta\} \leq 1$. The conclusion is the following. Assume that $\alpha,\beta,s$ are as in Theorem \ref{thm2}, and $|A|^{\beta} \geq |B|^{\alpha}$. Then,
%\begin{equation}\label{form14} |\mathcal{T}| \geq \begin{cases} \delta^{\eta} M|A||B|^{(\beta + s - \alpha)/(2\beta)}, & \alpha \leq s, \\ \delta^{\eta}M|A|^{(\alpha + s)/(2\alpha)}|B|^{1/2}, & \alpha \geq s. \end{cases} \end{equation}  
%\end{remark} 

%\begin{remark} The condition $\max\{\theta s/\alpha,(1 - \theta)s/\beta\} \leq 1$ is satisfied (e.g.) for $\theta \in \{\alpha/(\alpha + \beta),\beta/(\alpha + \beta)\} \in [0,1]$ thanks to the hypothesis $s \leq \alpha + \beta$.
%\end{remark}

\begin{remark}\label{rem1} %The bounds \eqref{form14} remain valid if $|A|^{\beta} < |B|^{\alpha}$, but then sharper bounds can be obtained by reversing the roles of $A$ and $B$. 
We only examined the sharpness of Theorem \ref{thm2} in the case $\alpha = \beta$. Then, for $|A| \geq |B|$, \eqref{form14} and $2\alpha \in [s,2 - s]$, the bound \eqref{form14} simplifies to
\begin{equation}\label{form23}
|\mathcal{T}| \geq \delta^{\eta}M\begin{cases} |A||B|^{s/(2\alpha)}, & \alpha \leq s, \\ |A|^{(\alpha + s)/(2\alpha)}|B|^{1/2}, & \alpha \geq s. \end{cases}
\end{equation} 
It turns out that the first bound is always sharp, and the second bound is sharp whenever $|A|^{\alpha - s} \leq |B|^{\alpha}$. In fact, for $|A|^{\alpha - s} = |B|^{\alpha}$ the second bound simplifies to $|\mathcal{T}| \geq \delta^{\eta}M|A|$. In the range $|B|^{\alpha} < |A|^{\alpha - s}$, the "trivial bound" $|\mathcal{T}| \gtrapprox M|A|$ is the sharp bound (and is stronger than the one given by Theorem \ref{thm2}).

We will sketch sharpness examples to \eqref{form23} in Section \ref{rem:sharpness-thm2}.
\end{remark}

Theorems \ref{thm1}--\ref{thm2} should be contrasted with the recent results of H. Wang and S. Wu \cite{WangWu24,WangWu26}. The difference is that the non-concentration condition of Theorem \ref{thm1} for the tube families $\mathcal{T}(p)$ incident to the squares $p \in \mathcal{P}$ is significantly stronger than in \cite{WangWu24,WangWu26}. On the other hand the conclusions are also stronger in cases where $M$ or $|\mathcal{P}|$ are small. Let us give one concrete comparison: in the case $t = 1$ the main theorem of \cite{WangWu24} applied to the setting of Theorem \ref{thm1} yields the lower bound $|\mathcal{T}| \gtrapprox M^{3/2}\delta^{1/2}|\mathcal{P}|$. The lower bound is smaller (weaker) than the one in Theorem \ref{thm1} whenever $M|\mathcal{P}|^{1 - s} < \delta^{-1}$. In fact, even the "trivial" bound $|\mathcal{T}| \gtrapprox M|\mathcal{P}|^{1/2}$ beats \cite{WangWu24} when $M|\mathcal{P}| < \delta^{-1}$.

The proofs of all our theorems follow the ``combinatorial scale decomposition'' approach. Applying Lipschitz function combinatorial decompositions from \cite{Shmerkin23,ShmerkinWang25}, we are able to locate scale blocks on which Theorem \ref{thm:FurstIntro} can be applied. The details of the multiscale decomposition are new. The different scale blocks are combined via a multi-scale incidence estimate from \cite{OS23}. This background is recalled in Section \ref{sec:preliminaries}, and the proofs of the main theorems are given in Sections \ref{sec:proofs1} and \ref{sec:proofs2}. The sharpness examples are discussed in Section \ref{sec:sharpness}. 

\begin{remark} 
  We comment on the history of this project. Initially, a non-sharp version of Theorem \ref{thm:Furstenberg-minimal-non-concentration} (with $su/4$ instead of $su/2$) was obtained in the first \emph{arXiv} version of our article \cite{OrponenShmerkin26} in 2023. This estimate was based on the ``sticky'' Furstenberg estimate obtained in \cite{OrponenShmerkin26}. A few months later, K.~Ren and H.~Wang proved the sharp Furstenberg set estimate in Theorem \ref{thm:FurstIntro}. We realised that by applying their result we could obtain the sharp version of Theorem \ref{thm:Furstenberg-minimal-non-concentration}, stated above. Since the sharp version Theorem \ref{thm:Furstenberg-minimal-non-concentration} relies on the sharp Furstenberg set estimate that appeared after our preprint, and following a referee suggestion, we decided to split off Theorem \ref{thm:Furstenberg-minimal-non-concentration} from \cite{OrponenShmerkin26} into the present article. In addition to strengthening the result, we have also added and corrected details in the proofs with respect to the \emph{arXiv} version of \cite{OrponenShmerkin26}. Theorems \ref{thm1} and \ref{thm2} appear here for the first time.
\end{remark}

\section{Preliminaries and notation}

\label{sec:preliminaries}

\subsection{Dyadic scales, tubes, and nice configurations}

We fix dyadic scales $\delta,\Delta \in 2^{-\N}$ and follow the conventions in \cite[Section 2 and Appendix A]{OrponenShmerkin26}. 

For $d \in \N$ and $A\subset \R^d$, write
\begin{displaymath}
\mathcal{D}_{\delta}(A):=\{p\in \delta\Z^d+[0,\delta)^d:\, p\cap A\neq\emptyset\}
\end{displaymath}
for the $\delta$-dyadic squares hitting $A$.  We abbreviate $\mathcal{D}_{\delta}:=\mathcal{D}_{\delta}([-1,1)^d)$.

We denote $|A|_{\delta}=|\mathcal{D}_{\delta}(A)|$. We extend this to non-dyadic $\delta$ by setting $|A|_{\delta}:=|A|_{\delta'}$ where $\delta'=\min\{2^{-n}:\, 2^{-n}\le \delta\}$. Up to a multiplicative constant, $|A|_{\delta}$ equals the $\delta$-covering number of $A$.

\begin{definition}[Frostman and Katz-Tao $(\delta,s,C)$-sets]\label{FrostmanKatzTao} Let $\delta \in 2^{-\N}$, $s \geq 0$ and $C > 0$. A set $\mathcal{P} \subset \mathcal{D}_{\delta}$ is called a \emph{Frostman $(\delta,s,C)$-set} if
\begin{displaymath}
|\mathcal{P} \cap Q| \leq Cr^{s}|\mathcal{P}|, \qquad Q \in \mathcal{D}_{r}, \, r \in 2^{-\N} \cap [\delta,1].
\end{displaymath}
Similarly, a set $\mathcal{P} \subset \mathcal{D}_{\delta}$ is called a \emph{Katz-Tao $(\delta,s,C)$-set} if
\begin{displaymath}
|\mathcal{P} \cap Q| \leq C(r/\delta)^{s}, \qquad Q \in \mathcal{D}_{r}, \, r \in 2^{-\N} \cap [\delta,1].
\end{displaymath}
If the constant "$C$" is absolute, we sometimes simply write "Frostman $(\delta,s)$-set" or "Katz-Tao $(\delta,s)$-set".    \end{definition}

We then proceed to define \emph{dyadic $\delta$-tubes}. For $(a,b)\in \R^2$, let
\begin{displaymath}
\mathbf{D}(a,b):=\{(x,y)\in \R^2:\, y=ax+b\}.
\end{displaymath}
This is the ``point-line duality'' map. If $p=[a,a+\delta)\times [b,b+\delta)\in\mathcal{D}_{\delta}([-1,1)\times\R)$, define the associated \emph{dyadic $\delta$-tube} by
\begin{displaymath}
\mathbf{D}(p):=\bigcup_{(a',b')\in p}\mathbf{D}(a',b').
\end{displaymath}
We set
\begin{displaymath}
\mathcal{T}^{\delta}:=\{\mathbf{D}(p):\, p\in \mathcal{D}_{\delta}([-1,1)\times\R)\}.
\end{displaymath}
For $T=\mathbf{D}([a,a+\delta)\times[b,b+\delta))\in\mathcal{T}^{\delta}$, define its \emph{slope} by $\sigma(T):=a$.

%A \emph{$(\delta,s,C)$-set} is a bounded set $A\subset\R^d$ such that
%\begin{displaymath}
%|A\cap B(x,r)|_{\delta} \leq C r^{s}|A|_{\delta}\quad\text{for all }x\in\R^d,\ r\in[\delta,1].
%\end{displaymath}

We identify subsets $\mathcal{P}\subset\mathcal{D}_{\delta}$ with the union of the squares in $\mathcal{P}$.

For a pair $(\mathcal{P},\mathcal{T})\subset \mathcal{D}_{\delta}\times\mathcal{T}^{\delta}$ and $p\in\mathcal{P}$, write
\begin{displaymath}
\mathcal{T}(p):=\{T\in\mathcal{T}:\, T\cap p\neq\emptyset\},\qquad
\sigma(\mathcal{T}(p)):=\{\sigma(T):\,T\in\mathcal{T}(p)\}.
\end{displaymath}
We say that $(\mathcal{P},\mathcal{T})$ is a \emph{$(\delta,s,C,M)$-nice configuration} if for every $p\in\mathcal{P}$,
\begin{displaymath}
C^{-1}M \le |\mathcal{T}(p)| \le CM,
\end{displaymath}
and $\sigma(\mathcal{T}(p))$ is a Frostman $(\delta,s,C)$-set in $[-1,1)$.

Finally, for $p\in \mathcal{D}_{\Delta}$ let $S_p:\R^2\to\R^2$ be the orientation-preserving homothety mapping $p$ onto $[0,1)^2$. For $\mathcal{P}\subset\mathcal{D}_{\delta}$ with $\delta\le \Delta$,
\begin{displaymath}
S_p(\mathcal{P}\cap p):=\{S_p(q):\,q\in\mathcal{P},\ q\subset p\}\subset\mathcal{D}_{\delta/\Delta}.
\end{displaymath}

We refer to \cite[Section 2]{OrponenShmerkin26} for more details on the above definitions.

\subsection{Uniform sets and branching functions}

\label{subsec:uniform-sets}

Assume that $\delta=\Delta^m$ with $\Delta=2^{-T}\in 2^{-\N}$, and let $\mathcal{P}\subset \mathcal{D}_{\delta}$. We say that $\mathcal{P}$ is \emph{$\{\Delta^j\}_{j=1}^m$-uniform} if there exist numbers $N_1,\ldots,N_m\ge 1$ such that for every $1\le j\le m$ and every $\mathbf{p}\in \mathcal{D}_{\Delta^{j-1}}(\mathcal{P})$,
\begin{displaymath}
\big|\mathcal{D}_{\Delta^j}(\mathcal{P})\cap \mathbf{p}\big|=N_j.
\end{displaymath}
In this case, the \emph{branching function} $\beta: [0,m]\to [0,2m]$ of $\mathcal{P}$ is defined by
\begin{displaymath}
\beta(j):=\frac{\log |\mathcal{P}|_{\Delta^j}}{T}=\frac{1}{T}\sum_{i=1}^j \log N_i,\qquad 0\le j\le m,
\end{displaymath}
and then linearly interpolating for non-integer values of the parameter. Note that $\beta$ is $2$-Lipschitz, non-decreasing, piecewise-linear, and satisfies $\beta(0)=0$ and $\beta(m)=\log |\mathcal{P}|/\log(1/\Delta)$.

The next standard pigeonholing lemma allows us to pass to a large uniform subset (see \cite[Appendix A]{OrponenShmerkin26}).
\begin{lem}\label{l:uniformization}
Let $\epsilon>0$. Then there exists $T_0=T_0(\epsilon)\in\N$ such that the following holds. If $\Delta=2^{-T}$ with $T\ge T_0$, $\delta=\Delta^m$, and $\mathcal{P}\subset\mathcal{D}_{\delta}$, then there exists a $\{\Delta^j\}_{j=1}^m$-uniform set $\mathcal{P}'\subset\mathcal{P}$ such that
\begin{displaymath}
|\mathcal{P}'|\ge \delta^{\epsilon}|\mathcal{P}|.
\end{displaymath}
In particular, one may arrange $T^{-1}\log(2T)\le \epsilon$.
\end{lem}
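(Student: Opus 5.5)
Lemma \ref{l:uniformization} is proved by iterated pigeonholing over the tree of dyadic scales $\Delta^{m},\Delta^{m-1},\ldots,\Delta$, working from the finest scale upward and keeping at each step a popular branching number. I will build the set $\mathcal{P}'$ by downward induction on the scale index $j$, in a form where uniformity at scales finer than $\Delta^{j}$ is preserved when we prune at coarser scales.

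Set $\mathcal{P}_m:=\mathcal{P}$. Suppose $\mathcal{P}_j \subset \mathcal{P}$ has been constructed, and that it is uniform at all scales $\Delta^{i}$, $i>j$, in the following sense: there are numbers $N_{j+1},\ldots,N_m$ such that every $\Delta^{i-1}$-square meeting $\mathcal{P}_j$, with $i>j$, contains exactly $N_i$ children at scale $\Delta^{i}$. By this uniformity, every $\mathbf{p}\in\mathcal{D}_{\Delta^{j}}(\mathcal{P}_j)$ contains the same number $N_{j+1}\cdots N_m$ of points of $\mathcal{P}_j$. For each $\mathbf{q}\in\mathcal{D}_{\Delta^{j-1}}(\mathcal{P}_j)$, the number $n(\mathbf{q}):=|\mathcal{D}_{\Delta^{j}}(\mathcal{P}_j)\cap\mathbf{q}|$ lies in $[1,2^{2T}]$. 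I split this range into the dyadic classes $[2^{k},2^{k+1})$, $0\le k<2T$, and choose the class $k$ that carries the most points of $\mathcal{P}_j$; it retains at least a $(2T)^{-1}$ fraction of them.

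I then keep only the parents $\mathbf{q}$ in class $k$, and inside each such $\mathbf{q}$ I discard children arbitrarily until exactly $N_j:=2^{k}$ remain. Since $n(\mathbf{q})<2^{k+1}$, this keeps at least half the children. Removing whole $\Delta^{j}$-squares leaves the finer-scale counts unchanged, so the inductive hypothesis is preserved, and the new set $\mathcal{P}_{j-1}$ is additionally uniform at level $j$. The total loss at this step is a factor of at most $4T$. After all $m$ steps, $\mathcal{P}':=\mathcal{P}_0$ is $\{\Delta^j\}_{j=1}^m$-uniform and satisfies
\[
|\mathcal{P}'|\ge(4T)^{-m}|\mathcal{P}|=\delta^{\log(4T)/T}|\mathcal{P}|.
\]
Choosing $T_0$ so that $\log(4T)/T\le\epsilon$ for all $T\ge T_0$ gives the claim, and this choice also yields $T^{-1}\log(2T)\le\epsilon$.

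The main obstacle is the order of the pigeonholing. If one prunes from coarse to fine, later prunings at finer scales destroy the uniformity already established at coarser scales. Working from fine to coarse and deleting entire subtrees avoids this, because the fine-scale counts inside surviving squares are never altered. The remaining points need only routine care: the equal-mass property of subtrees (which lets counting squares stand in for counting points), and the fact that trimming each parent to exactly $2^{k}$ children costs at most a factor of $2$.
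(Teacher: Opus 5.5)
Your argument is correct and is the standard fine-to-coarse iterated pigeonholing (prune whole subtrees, use the equal-mass property, trim each parent to exactly $2^{k}$ children), which is the argument the paper relies on when it defers the proof to \cite[Appendix A]{OrponenShmerkin26}. One trivial slip: $n(\mathbf{q})$ can equal $2^{2T}$, so you need the $2T+1$ classes $0\le k\le 2T$, which makes the per-step loss at most $2(2T+1)$ rather than $4T$; this is harmless, since $T^{-1}\log(4T+2)\to 0$.
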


\subsection{Lipschitz decomposition lemma}

%Given a function $f:[a,b]\to\R$, we write
%\begin{displaymath}
%s_f(a,b):=\frac{f(b)-f(a)}{b-a}
%\end{displaymath}
%for the slope of the secant line through $(a,f(a))$ and $(b,f(b))$.

Given $f:[a,b]\to\R$ and numbers $\e>0,\sigma$, we say that $(f,a,b)$ is \emph{$(\sigma,\e)$-superlinear} if
\begin{displaymath}
f(x)\ge f(a)+\sigma(x-a)-\e(b-a)\qquad\text{for all }x\in[a,b].
\end{displaymath}
%Note that we do not assume that $\sigma=s_f(a,b)$. %In the case where we do have $\sigma=s_f(a,b)$, we simply say that $(f,a,b)$ is \emph{$\e$-superlinear}. We say that $(f,a,b)$ is \emph{$\e$-linear} if both $(f,a,b)$ and $(-f,a,b)$ are $\e$-superlinear. Equivalently,
%\begin{displaymath}
%\big|f(x)-L_{f,a,b}(x)\big|\le \e|b-a|\qquad\text{for all }x\in[a,b],
%\end{displaymath}
%where $L_{f,a,b}$ is the affine function agreeing with $f$ at $a$ and $b$.
The next lemma is \cite[Lemma 2.11]{OrponenShmerkin26} (and it follows directly by combining \cite[Lemmas 5.21--5.22]{ShmerkinWang25}).
\begin{lem}\label{l:combinatorial-weak}
For every $d\in\N$ and $\epsilon>0$ there is $\tau=\tau(d,\e)>0$ such that the following holds: for any non-decreasing $d$-Lipschitz function $f:[0,m]\to\R$ with $f(0)=0$ there exist sequences
\begin{align*}
0&=a_0 < a_1 <\cdots < a_{n} =m,\\
0&\le t_0 <t_1<\cdots <t_{n-1} \le d,
\end{align*}
such that:
\begin{enumerate}[(i)]
\item $a_{j+1}-a_j \ge \tau m$.
\item $(f,a_{j},a_{j+1})$ is $(t_j,0)$-superlinear.
\item $\sum_{j=0}^{n - 1} (a_{j+1}-a_j)t_j \ge f(m)-\e m$.
\end{enumerate}
\end{lem}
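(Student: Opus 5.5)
We plan to build the decomposition from the greatest convex minorant of $f$, and then to coarsen it so that every piece becomes long. Let $g:[0,m]\to\R$ be the largest convex function with $g\le f$. Since $f$ is continuous, $g(0)=f(0)=0$ and $g(m)=f(m)$. The slopes of $g$ lie in $[0,d]$:
\begin{itemize}
\item the constant function $0$ is a convex minorant of $f$, because $f$ is non-decreasing with $f(0)=0$; hence $g\ge 0=g(0)$, so the slopes of $g$ are $\ge 0$;
\item $g$ inherits the Lipschitz bound $d$, so its slopes are $\le d$.
\end{itemize}
Moreover, $g$ is affine on each component of $\{g<f\}$, and every breakpoint of $g$, that is, every point where its slope changes, is a contact point where $g=f$.

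The key observation is the following. Let $[a,b]$ be an interval with $g(a)=f(a)$, and let $\sigma$ be the right-derivative of $g$ at $a$. By convexity,
\begin{displaymath}
f(x)\ge g(x)\ge g(a)+\sigma(x-a)=f(a)+\sigma(x-a),\qquad x\in[a,b].
\end{displaymath}
So $(f,a,b)$ is $(\sigma,0)$-superlinear. The price of using the smallest slope $\sigma$ on all of $[a,b]$, compared with the true increment of $g$, is
\begin{displaymath}
g(b)-g(a)-\sigma(b-a)\le (\sigma_{\max}-\sigma)(b-a),
\end{displaymath}
where $\sigma_{\max}$ is the largest slope of $g$ on $[a,b]$. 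Since $\sum_j\big(g(a_{j+1})-g(a_j)\big)=g(m)=f(m)$, condition (iii) reduces to making the total price at most $\e m$. This holds as long as every $a_j$ is a contact point and we set $t_j:=g'(a_j^+)$.

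The coarsening proceeds in three steps.
\begin{enumerate}[(1)]
\item Split the slope range $[0,d]$ into $K\approx 2d/\e$ bins of width $\e/2$. Group together the maximal runs of $[0,m]$ on which the slope of $g$ lies in a single bin. The endpoints of these runs are breakpoints of $g$, hence contact points. By monotonicity of $g'$ there are at most $K$ such groups, and the price is at most $(\e/2)m$.
\item Absorb every group of length $<\tau m$ into its right neighbour; if it is the last group, absorb it into its left neighbour. The new $t_j$ is the slope at the new left endpoint. Along each merged interval the slope can jump by at most $d$. The total length of the absorbed short groups is at most $K\tau m$, so the additional price is at most $dK\tau m\approx 2d^2\tau m/\e$. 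Choosing $\tau=\e^2/(4d^2)$ makes this at most $\e m/2$.
\item If two consecutive intervals end up with equal $t_j$, merge them. This costs nothing, because $g$ is then affine across the join, and it gives the strict monotonicity $t_0<t_1<\cdots$. Monotonicity itself is automatic from the convexity of $g$.
\end{enumerate}

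The main obstacle is step (2). The absorption must not destroy the contact-point property of the endpoints; it does not, because we only delete endpoints and never create new ones. It must also not blow up the price. This is why we bin by slope first: it bounds the number of short pieces by $K$, independently of $m$, so their total length is $O(\tau m/\e)$. Condition (i) then holds by construction, and (ii) and (iii) follow from the observation above.
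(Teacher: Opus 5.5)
There is a genuine gap in your step (2), in the price accounting. When a short group $[a,b]$ is absorbed into its right neighbour $[b,c]$, the merged interval $[a,c]$ receives $t=g'(a^+)$. This is a slope from the bin of the \emph{short} group, and it is then used on the whole long group $[b,c]$. The extra price is $\int_b^c (g'(x)-g'(a^+))\,dx$, which can be as large as $d(c-b)$. It is controlled by the length of the long neighbour, not by the length of the absorbed group, so your bound $dK\tau m$ is false.

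Concretely, let $d=1$ and $f(x)=\max\{0,x-\eta m\}$ with $0<\eta<\tau$. Then $g=f$, and step (1) gives the groups $[0,\eta m]$ (slope $0$) and $[\eta m,m]$ (slope $1$). Step (2) merges them into the single interval $[0,m]$ with $t_0=0$. Hence $\sum_j(a_{j+1}-a_j)t_j=0$, whereas $f(m)-\e m=(1-\eta-\e)m$. There is also a secondary problem: with your rule as written, if the last two groups are both short, the final piece can remain shorter than $\tau m$.

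Reversing the direction repairs most of this. If a short group is absorbed into its \emph{left} neighbour, the merged interval keeps the left neighbour's slope, and only the absorbed part is undercounted, by at most $d$ times its length. The total length of short groups is at most $K\tau m$.

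What remains is the initial chain of short groups, which has no left neighbour. Here you need a new ingredient, because contact points of $g$ near $\tau m$ need not exist. One option is to use the monotonicity of $f$ again: slope $0$ is admissible on every interval.
\begin{itemize}
\item Start the first piece with $[0,\tau m]$ and $t_0=0$, at cost $f(\tau m)\le d\tau m$.
\item Replace $g$ by the greatest convex minorant $g_1$ of $f|_{[\tau m,m]}$, so that $\tau m$ is a contact point.
\item Run your step (1) for $g_1$. Attach the initial short runs of $g_1$ to the slope-$0$ piece, and attach every later short run to its left neighbour.
\item Merge the first two pieces if $t_1=0$.
\end{itemize}
For $\tau\sim\e^2/d^2$, $g_1$ has at least one long run. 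Every piece then has length at least $\tau m$, the slopes are strictly increasing, and the total loss is at most $\e m/2+d(K+1)\tau m\le\e m$.

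For comparison, the paper gives no proof of its own. It quotes the lemma from Orponen--Shmerkin (Lemma 2.11), which in turn combines Lemmas 5.21--5.22 of Shmerkin--Wang.
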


\begin{remark}\label{r:combinatorial-weak}
Note that if $F$ is the piecewise affine function with $F(0)=0$ and slope $t_j$ on $[a_j,a_{j+1}]$, then (ii) and (iii) imply
\begin{displaymath}
F(a_j)\le f(a_j)\le F(a_j)+\e m,\qquad j\in\{0,\ldots,n-1\},
\end{displaymath}
and consequently
\begin{displaymath}
|f(a_i)-f(a_j)-(F(a_i)-F(a_j))|\le \e m,\qquad i,j\in\{0,\ldots,n-1\}.
\end{displaymath}
\end{remark}

\subsection{Multiscale decomposition of nice configurations}

We now repeat \cite[Corollary 4.1]{ShmerkinWang25b} (itself based on \cite[Proposition 5.1]{OS23}). In the statement, $A\lessapprox_{\delta} B$ stands for $A\le C\log(1/\delta)^C B$ for some universal constant $C>0$, and likewise for $A\approx_{\delta} B$.
\begin{cor}\label{cor:ShWa} Fix $N \geq 2$ and a sequence $\{\Delta_{j}\}_{j = 0}^{N} \subset 2^{-\N}$ with
\begin{displaymath} 
    0 < \delta = \Delta_{N} < \Delta_{N - 1} < \ldots < \Delta_{1} < \Delta_{0} = 1. 
\end{displaymath}
Let $(\mathcal{P}_{0},\mathcal{T}) \subset \mathcal{D}_{\delta} \times \mathcal{T}^{\delta}$ be a $(\delta,s,C,M)$-nice configuration. Then, there exists a set $\mathcal{P} \subset \mathcal{P}_{0}$ such that the following properties hold:
\begin{itemize}
\item[(i)] $|\mathcal{D}_{\Delta_{j}}(\mathcal{P})| \approx_{\delta} |\mathcal{D}_{\Delta_{j}}(\mathcal{P}_{0})|$ and $|\mathcal{P} \cap \mathbf{p}| \approx_{\delta} |\mathcal{P}_{0} \cap \mathbf{p}|$, $1 \leq j \leq N$, $\mathbf{p} \in \mathcal{D}_{\Delta_{j}}(\mathcal{P})$.
\item[(ii)] For every $0 \leq j \leq N - 1$ and $\mathbf{p} \in \mathcal{D}_{\Delta_{j}}(\mathcal{P})$, there exist numbers
\begin{displaymath}
   C_{\mathbf{p}} \approx_{\delta} C \quad \text{and} \quad M_{\mathbf{p}} \geq 1, 
\end{displaymath}
and a family of tubes $\mathcal{T}_{\mathbf{p}} \subset \mathcal{T}^{\Delta_{j + 1}/\Delta_{j}}$ with the property that $(S_{\mathbf{p}}(\mathcal{P} \cap \mathbf{p}),\mathcal{T}_{\mathbf{p}})$ is a $(\Delta_{j + 1}/\Delta_{j},s,C_{\mathbf{p}},M_{\mathbf{p}})$-nice configuration.
\end{itemize}
Furthermore, the families $\mathcal{T}_{\mathbf{p}}$ can be chosen so that
\begin{displaymath} 
    \frac{|\mathcal{T}|}{M} \gtrapprox_{\delta} \prod_{j = 0}^{N - 1} \max_{\mathbf{p}_{j} \in \mathcal{D}_{\Delta_{j}}} \frac{|\mathcal{T}_{\mathbf{p}_{j}}|}{M_{\mathbf{p}_{j}}}. 
\end{displaymath}
All the constants implicit in the $\approx_{\delta}$ notation are allowed to depend on $N$.
\end{cor}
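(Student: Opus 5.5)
The plan is to prove the statement by induction on $N$. The whole argument reduces to a two-scale version ($N = 2$), which is essentially \cite[Proposition 5.1]{OS23}. All losses will be polylogarithmic in $1/\delta$, and the number of losses grows with $N$, which is why the implicit constants may depend on $N$.

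\emph{Two-scale step.} Fix $\delta < \Delta \le 1$ and a $(\delta,s,C,M)$-nice configuration $(\mathcal{P}_0,\mathcal{T})$. I would proceed as follows.
\begin{enumerate}[(1)]
\item Pigeonhole dyadic values so that a $\approx_\delta$-fraction of the pairs $(p,T)$, $p \in \mathcal{P}_0$, $T \in \mathcal{T}(p)$, have the following quantities essentially constant: the number of $\delta$-tubes of $\mathcal{T}(p)$ inside a given $\Delta$-tube $\mathbf{T} \supset T$; the number of $\Delta$-tubes meeting $p$; and the number of $\delta$-squares of $\mathcal{P}$ in each $\mathbf{p} \in \mathcal{D}_\Delta$.
\item For the \emph{coarse} configuration, let $\mathcal{T}_\Delta$ be the $\Delta$-tubes $\mathbf{T}$ retained in (1). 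The slope set of the $\Delta$-tubes through $\mathbf{p} \in \mathcal{D}_\Delta(\mathcal{P})$ is the $\Delta$-cover of a Frostman $(\delta,s,C)$-set, restricted to arcs of roughly equal mass by the uniformisation. It is therefore a Frostman $(\Delta,s,\approx_\delta C)$-set, and $(\mathcal{D}_\Delta(\mathcal{P}),\mathcal{T}_\Delta)$ is $(\Delta,s,\approx C,M_\Delta)$-nice.
\item For the \emph{fine} configurations, for each $\mathbf{p}$ take $\mathcal{T}_{\mathbf{p}}$ to be the rescalings $S_{\mathbf{p}}(T \cap \mathbf{p})$, which are $(\delta/\Delta)$-tubes. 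A $\delta$-tube through $p \subset \mathbf{p}$ becomes a $(\delta/\Delta)$-tube through $S_{\mathbf{p}}(p)$.
\item Check non-concentration for the fine configurations by restricting $\sigma(\mathcal{T}(p))$ to a heavy $\Delta$-arc and rescaling. The restriction of a Frostman $(\delta,s)$-set to an arc carrying its typical mass is Frostman $(\delta/\Delta,s)$ after rescaling. Step (1) ensures every arc used is typical, and this yields the constant $M_{\mathbf{p}}$.
\item Double count. Each $\mathbf{T} \in \mathcal{T}_\Delta$ contains about $|\mathcal{T}_{\mathbf{p}}|/M_{\mathbf{p}} \cdot (M/M_\Delta)$ tubes of $\mathcal{T}$, for any $\mathbf{p}$ it meets. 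This gives
\begin{displaymath}
\frac{|\mathcal{T}|}{M} \gtrapprox_\delta \frac{|\mathcal{T}_\Delta|}{M_\Delta}\cdot\frac{|\mathcal{T}_{\mathbf{p}}|}{M_{\mathbf{p}}},
\end{displaymath}
uniformly in $\mathbf{p}$ thanks to the uniformisation.
\end{enumerate}
Property (i), that cardinalities at each scale are preserved up to $\approx_\delta$, follows because each refinement discards only a $\approx_\delta$-fraction, and this fraction is spread evenly across dyadic squares by the pigeonholing.

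\emph{Induction.} Apply the two-scale step with $\Delta = \Delta_{N-1}$. The coarse configuration $(\mathcal{D}_{\Delta_{N-1}}(\mathcal{P}),\mathcal{T}_{\Delta_{N-1}})$ is a nice configuration with $N-1$ scales, and the inductive hypothesis applies to it. Refining the coarse squares induces a refinement of $\mathcal{P}$: keep the $\delta$-squares inside the surviving $\Delta_{N-1}$-squares. By property (i) for the coarse problem, this loses only $\approx_\delta$ factors, so the fine configurations remain nice with comparable constants. Multiplying the inequalities from each step, and bounding each factor by the maximum over $\mathbf{p}_j$, gives the product bound. The $\max$ is harmless: after uniformisation the ratios $|\mathcal{T}_{\mathbf{p}_j}|/M_{\mathbf{p}_j}$ are essentially independent of $\mathbf{p}_j$, or one may simply discard the squares where the ratio is much smaller than its dyadic mode.

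\emph{Main obstacle.} The delicate point is making the pigeonholing in step (1) consistent, so that after refinement \emph{every} retained $\mathbf{p}$ carries a nice configuration with the same effective $M_{\mathbf{p}}$ and Frostman constant $\approx_\delta C$. At the same time the double-counting in step (5) must stay valid, since the refinement may destroy the lower bound $|\mathcal{T}(p)| \ge C^{-1}M$. I would handle this as in \cite[Proposition 5.1]{OS23}. Iteratively prune pairs $(p,T)$ whose dyadic statistics are atypical, each round losing a $\approx_\delta$ fraction. Stop after $O_N(\log\log(1/\delta))$ rounds or a bounded number of stabilisation rounds, which keeps all losses polylogarithmic.
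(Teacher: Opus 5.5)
Your architecture, a two-scale decomposition as in \cite[Proposition 5.1]{OS23} iterated by induction on $N$, is the natural route. The paper gives no proof: it quotes the statement from \cite[Corollary 4.1]{ShmerkinWang25b}, which is based on that two-scale proposition. However, your own sketch of the two-scale step contains a genuine error.

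\emph{The fine configurations.} In step (4) you conflate rescaling a $\Delta$-square with rescaling a $\Delta$-tube. The map $S_{\mathbf{p}}$ is a homothety, so it preserves slopes. The tubes of $\mathcal{T}_{\mathbf{p}}$ through $S_{\mathbf{p}}(p)$ therefore have slope set essentially $\mathcal{D}_{\delta/\Delta}(\sigma(\mathcal{T}(p)))$, spread over all of $[-1,1)$, and the correct normalisation is $M_{\mathbf{p}}\approx|\sigma(\mathcal{T}(p))|_{\delta/\Delta}$. Restricting $\sigma(\mathcal{T}(p))$ to one $\Delta$-arc and dilating by $\Delta^{-1}$ describes something else: the configuration inside a $\Delta$-tube after anisotropic rescaling, with $\approx M/M_{\Delta}$ tubes per square. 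These two numbers differ already when $\mathcal{P}_{0}=\{p\}$. Take $\delta=\Delta^{2}$, $s\le\frac{1}{2}$, and $\sigma(\mathcal{T}(p))$ a maximal $\Delta$-separated set; then $M/M_{\Delta}\sim1$ but $|\mathcal{T}_{\mathbf{p}}|\sim\Delta^{-1}$. With your $M_{\mathbf{p}}$ the fine configuration is not nice, step (5) fails, and the product inequality reads $1\gtrapprox_{\delta}\Delta^{-1}$.

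The non-concentration actually needed is that a $(\delta/\Delta)$-cover of $\sigma(\mathcal{T}(p))$ is Frostman. This requires equalising the mass on $(\delta/\Delta)$-arcs, whereas your step (1) only equalises it on $\Delta$-arcs. Step (2) is also too quick. The $\Delta$-tubes meeting $\mathbf{p}$ come from all $p\subset\mathbf{p}$, and also from tubes through other squares. A union of Frostman sets need not be Frostman.

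\emph{The double count.} Once the normalisation is corrected, step (5) is the real content and cannot simply be asserted. Only the elements of $\mathcal{T}_{\mathbf{p}}$ whose slope lies in the $\Delta$-arc of $\mathbf{T}$ come from tubes inside $\mathbf{T}$. There are $\gtrapprox|\mathcal{T}_{\mathbf{p}}|/M_{\Delta}$ of them only on average over the coarse tubes through $\mathbf{p}$, not for each one. You need either an equidistribution enforced by pigeonholing, or an incidence count in which the richness of a $\delta$-tube is bounded by (coarse richness)$\times$(fine richness), with both pigeonholed to be essentially constant. Making this compatible with niceness is the substance of \cite[Proposition 5.1]{OS23}.

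For the $\max$ over $\mathbf{p}$, keep a single dyadic class of $|\mathcal{T}_{\mathbf{p}}|/M_{\mathbf{p}}$. It is the squares where this ratio is much \emph{larger} than the mode that must be discarded. Discarding the small ones does not control the maximum.

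\emph{The induction.} Property (i) does not survive your induction as described. The two-scale step controls counts only at the single scale $\Delta=\Delta_{N-1}$. A global $\approx_{\delta}$-loss of $\Delta_{N-1}$-squares may be concentrated in a few $\Delta_{j}$-squares. So neither $|\mathcal{D}_{\Delta_{j}}(\mathcal{P})|\approx_{\delta}|\mathcal{D}_{\Delta_{j}}(\mathcal{P}_{0})|$ nor the local counts for $j<N-1$ follow. Pigeonholing a globally popular value does not spread losses evenly.

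This is fixable when $\mathcal{P}_{0}$ is $\{\Delta_{j}\}$-uniform, which is the only case used in this paper. There a global proportion of $\Delta_{N-1}$-squares transfers to every coarser scale. You can then prune, scale by scale, the squares with too few survivors. This is harmless, because niceness is hereditary under deleting squares: each $\mathcal{T}(p)$ depends only on $p$ and $\mathcal{T}$. Also, the maxima in the product can only decrease under deletion.
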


\section{Proof of Theorem \ref{thm:Furstenberg-minimal-non-concentration}} 

\label{sec:proofs1}

The idea of the proof of Theorem \ref{thm:Furstenberg-minimal-non-concentration} is to first apply to (the branching function of) $\mathcal{P}$ a multiscale decomposition provided by Lemma \ref{l:combinatorial-weak}. This procedure decomposes the scales between $\delta$ and $1$ into "blocks" where $\mathcal{P}$ looks roughly $t_{j}$-dimensional for an increasing sequence of exponents $t_{j} \in [0,2]$. On each "block" separately, we obtain a lower bound from Theorem \ref{thm:FurstIntro}, and eventually combine the estimates by applying Corollary \ref{cor:ShWa}.

\begin{remark} To facilitate following the next proof, let us note that
\begin{equation}\label{form96}
\min\{t,\frac{s + t}{2},1\} = \begin{cases} t, & t \leq s, \\ \frac{s + t}{2}, & t \in [s,2 - s],\\ 1, & t \in [2 - s,2]. \end{cases}
\end{equation}
\end{remark}

\begin{proof}[Proof of Theorem \ref{thm:Furstenberg-minimal-non-concentration}] Fix the parameters $s,t,u$ and $0 < \zeta < \tfrac{t}{2} + \tfrac{s \cdot u}{2}$ from the statement. Write
\begin{displaymath}
\omega:=\frac{t}{2}+\frac{s u}{2}-\zeta>0,
\end{displaymath}
and choose
\begin{displaymath}
0<\eta<\frac{\omega}{8}.
\end{displaymath}
Let $\xi = \xi(s,t,u,\zeta,\eta) > 0$ be another small parameter with
\begin{displaymath}
\xi<\frac{\omega}{8}.
\end{displaymath}
To avoid endpoint issues, we always apply Theorem \ref{thm:FurstIntro} with a strict margin $\eta$ below the corresponding endpoint value of $\gamma$. Set
\begin{displaymath}
K_{\eta}:=\{(s,t',\gamma'):\ t'\in[0,2],\ \eta\le \gamma' \le \min\{t',\frac{s+t'}{2},1\}-\eta\}.
\end{displaymath}
All parameter triples used below in applications of Theorem \ref{thm:FurstIntro} (namely for indices in $I_1^{\mathrm{big}}\cup I_2\cup I_3$) lie in $K_{\eta}$, and by the last sentence of Theorem \ref{thm:FurstIntro} we may choose $\epsilon_{\mathrm{F}}=\epsilon_{\mathrm{F}}(s,t,u,\zeta,\eta)>0$ and the small-scale threshold $\delta_0$ uniformly on this compact set. Let $\tau=\tau(2,\xi)>0$ be the constant from Lemma \ref{l:combinatorial-weak}, and choose
\begin{displaymath}
\epsilon=\epsilon(s,t,u,\zeta,\eta,\xi)>0
\end{displaymath}
so small that
\begin{displaymath}
\epsilon<\min\left\{\frac{\omega}{2s+4},\frac{\tau\epsilon_{\mathrm{F}}}{2}\right\}.
\end{displaymath}
In particular,
\begin{displaymath}
\frac{s\epsilon}{2}+2\eta+\frac{3\xi}{2}<\omega
\quad\text{and}\quad
\frac{2\epsilon}{\tau}<\epsilon_{\mathrm{F}}.
\end{displaymath}
Without loss of generality, we may assume that $\delta > 0$ has the form $\delta = \Delta^{m}$ for some $\Delta \sim_{\epsilon} 1$ and $m \in \N$, and that $\mathcal{P}$ is $\{\Delta^j\}_{j=1}^m$-uniform. This reduction is possible thanks to Lemma \ref{l:uniformization}, which in any case allows us to find a $\{\Delta^{j}\}_{j = 1}^{m}$-uniform subset $\mathcal{P}' \subset \mathcal{P}$ with $|\mathcal{P}'| \geq \delta^{\epsilon}|\mathcal{P}|$, provided that $\Delta = 2^{-T}$ with $T^{-1} \log (2T) \leq \epsilon$.

Let $\beta:[0,m]\to [0,2m]$
be the branching function of $\mathcal{P}$ defined in \S\ref{subsec:uniform-sets}. Apply Lemma \ref{l:combinatorial-weak} to $\beta$, with parameters $d = 2$ and $\xi$. Let $\{ a_j\}_{j = 0}^{n}$, $\{t_j\}_{j=0}^{n-1}$ be the resulting objects; then $n=O_{\xi}(1)$ by (i). Let $F$ be the piecewise affine function from Remark \ref{r:combinatorial-weak}. In particular,
\begin{equation}\label{eq:F-beta-loss}
|\beta(a_i)-\beta(a_j)-\big(F(a_i)-F(a_j)\big)|\le \xi m,\qquad i,j\in\{0,\ldots,n-1\}.
\end{equation}
Recall that $|\mathcal{P}|=\delta^{-t}$. Then the non-concentration assumption \eqref{eq:single-scale-non-conc} translates into
\begin{equation} \label{eq:minimal-non-conc}
\beta((1-t/2)m)\ge (u-\e) m.
\end{equation}
(The $\e m$ loss accounts for the fact that we had to extract a large uniform subset from the original $\mathcal{P}$.)

We will use the notation $\lessapprox_{\delta}$ to hide polylogarithmic losses.

We apply Corollary \ref{cor:ShWa} to the sequence $\Delta^{a_j}$, $j=n,n-1,\ldots,0$. With the notation of the corollary,
\begin{displaymath}
\frac{|\mathcal{T}|}{M}\gtrapprox_{\delta}  \prod_{j=0}^{n-1}  \frac{|\mathcal{T}_{\mathbf{p}_{j}}|}{M_j},
\end{displaymath}
where, for brevity, we write $M_j := M_{\mathbf{p}_j}$.
Motivated by \eqref{form96}, we split the indices $\{0,\ldots,n-1\}$ into 3 classes:
\begin{displaymath}
I_1 = \{ j: t_j \in [0,s]\},\quad  I_2 = \{ j: t_j \in (s,2-s)\},\quad  I_3 = \{ j: t_j \in [2-s,2]\}.
\end{displaymath}
Also split
\begin{displaymath}
I_1^{\mathrm{big}}:=\{j\in I_1:\ t_j\ge 2\eta\},\qquad
I_1^{\mathrm{small}}:=I_1\setminus I_1^{\mathrm{big}}.
\end{displaymath}
Recall that the $t_j$ are increasing. Let
\begin{displaymath}
[0,A_1] = \cup_{j\in I_1} [a_j,a_{j+1}], \quad [A_1,A_2]=\cup_{j\in I_2} [a_j,a_{j+1}], \quad [A_2,m] = \cup_{j\in I_3}  [a_j,a_{j+1}],
\end{displaymath}
with the convention that $A_1=0$ if $I_1=\emptyset$, $A_2=A_1$ if $I_2=\emptyset$, and $A_2=m$ if $I_3=\emptyset$ (it is easy to see that all these cases are consistent).

Next, we will estimate each factor $|\mathcal{T}_{\mathbf{p}_{j}}|/M_j$ individually. The idea is always the same: according to Corollary \ref{cor:ShWa} the tube family $\mathcal{T}_{\mathbf{p}_{j}}$ is associated with a  
\begin{displaymath}
(\Delta^{a_{j + 1} - a_{j}},s,C_{j},M_{j})\text{-nice configuration } (\mathcal{P}_{j},\mathcal{T}_{\mathbf{p}_{j}}),
\end{displaymath}
where $\mathcal{P}_{j}$ is a Frostman $(\Delta^{a_{j + 1} - a_{j}},t_{j})$-set (this follows from part (ii) of Lemma \ref{l:combinatorial-weak} and the branching function-to-Frostman dictionary, see e.g. \cite[Lemma 8.3]{OS23}). Now, the range of $t_{j}$ ($t_{j} \leq s$ or $t_{j} \in (s,2 - s)$ or $t_{j} \geq 2 - s$) determines the precise form of the lower bound produced by Theorem \ref{thm:FurstIntro} for $|\mathcal{T}_{\mathbf{p}_{j}}|/M_{j}$. 
Write $\Delta_j:=\Delta^{a_{j+1}-a_j}$. By Corollary \ref{cor:ShWa}, we have $C_j\lessapprox_{\delta}\delta^{-\epsilon}$. Since $a_{j+1}-a_j\ge \tau m$ by Lemma \ref{l:combinatorial-weak}(i), it follows that $\Delta_j\ge \delta^{\tau}$, and hence
\begin{equation}\label{eq:Cj-bound}
C_j\le \Delta_j^{-2\epsilon/\tau}\le \Delta_j^{-\epsilon_{\mathrm{F}}},
\end{equation}
provided that $\delta>0$ is chosen sufficiently small. Therefore all applications of Theorem \ref{thm:FurstIntro} below are legitimate with uniform constants depending only on $s,t,u,\zeta,\eta$. In particular, Theorem \ref{thm:FurstIntro} itself contributes no extra small losses in what follows; the only explicit losses come from the initial extraction of a large uniform subset and from comparing $F$ and $\beta$ via Remark \ref{r:combinatorial-weak}.

For the intervals $[\Delta^{a_{j+1}},\Delta^{a_j}]$ with $j\in I_1^{\mathrm{big}}$, thus $t_{j} \in [2\eta,s]$, we apply Theorem \ref{thm:FurstIntro} with
\begin{displaymath}
\gamma_j^{(1)}:=t_j-\eta<t_j,
\end{displaymath}
and for $j\in I_1^{\mathrm{small}}$ we use the trivial bound
\begin{displaymath}
\frac{|\mathcal{T}_{\mathbf{p}_{j}}|}{M_j}\gtrsim 1.
\end{displaymath}
Therefore, using Remark \ref{r:combinatorial-weak},
\begin{align}
\prod_{j\in I_1} \frac{|\mathcal{T}_{\mathbf{p}_{j}}|}{M_j} \notag
&\gtrapprox_{\delta} \prod_{j\in I_1^{\mathrm{big}}} \Delta^{(a_{j}-a_{j+1})(t_j-\eta)} \notag \\
&\gtrapprox_{\delta} \prod_{j\in I_1} \Delta^{(a_{j}-a_{j+1})(t_j-2\eta)_+} \notag \\
&\gtrapprox_{\delta} \Delta^{-F(A_1)+2\eta A_1} \notag \\
&\label{form20} \stackrel{\eqref{eq:F-beta-loss}}{\gtrapprox} \delta^{\xi}\Delta^{-\beta(A_1)+2\eta A_1}.
\end{align}
For the intervals $[\Delta^{a_{j+1}},\Delta^{a_j}]$ with $j\in I_2$, we apply Theorem \ref{thm:FurstIntro} with
\begin{displaymath}
\gamma_j^{(2)}:=\frac{s+t_j}{2}-\eta<\frac{s+t_j}{2},
\end{displaymath}
and Remark \ref{r:combinatorial-weak}, to get
\begin{align*}
\prod_{j\in I_2} \frac{|\mathcal{T}_{\mathbf{p}_{j}}|}{M_j}
&\gtrapprox_{\delta}
\prod_{j\in I_2}\Delta^{\frac{(a_{j}-a_{j+1})t_j}{2}}
\cdot \Delta^{\frac{s(a_{j}-a_{j+1})}{2}}
\cdot \Delta^{-\eta(a_{j+1}-a_j)} \\
&\gtrapprox_{\delta}
\Delta^{-\frac{F(A_2)-F(A_1)}{2}}
\cdot \Delta^{-\frac{s (A_2-A_1)}{2}}
\cdot \Delta^{\eta(A_2-A_1)} \\
&\stackrel{\eqref{eq:F-beta-loss}}{\gtrapprox}
\delta^{\xi/2}
\cdot \Delta^{-\frac{\beta(A_2)-\beta(A_1)}{2}}
\cdot \Delta^{-\frac{s (A_2-A_1)}{2}}
\cdot \Delta^{\eta(A_2-A_1)}.
\end{align*}
For the intervals $[\Delta^{a_{j+1}},\Delta^{a_j}]$ with $j\in I_3$, we apply Theorem \ref{thm:FurstIntro} with $\gamma^{(3)}:=1-\eta<1$ to get
\begin{equation} \label{eq:minimal-conc-FuRen}
\prod_{j\in I_3} \frac{|\mathcal{T}_{\mathbf{p}_{j}}|}{M_j}
\gtrapprox_{\delta}
\prod_{j\in I_3} \Delta^{(a_{j}-a_{j+1})(1-\eta)}
=\Delta^{A_2-m+\eta(m-A_2)}.
\end{equation}
Combining the three cases, and using that $\delta^{-\tfrac{t}{2}} = \Delta^{-\tfrac{1}{2}\beta(m)}$ and $\Delta^{\eta A_1}\ge \Delta^{\eta m}=\delta^{\eta}$,
\begin{align}
\frac{|\mathcal{T}|}{M}
&\gtrapprox_{\delta} \delta^{-\frac{t}{2}+2\eta+\frac{3\xi}{2}}
\left[
\Delta^{-\frac{\beta(A_1)}{2}} \cdot \Delta^{\frac{s(A_1-A_2)}{2}} \cdot \Delta^{(A_2-m) + \frac{1}{2}(\beta(m) - \beta(A_2))}
\right].\label{form135}
\end{align}
The inequality was written in this way to make explicit that all the powers of "$\Delta$" are non-positive. Let $A := (1 - \tfrac{t}{2})m$. We now divide the argument into cases, depending on the sizes of $A_{1},A_{2}$ relative to $A$.
\subsubsection*{Cases $A_{1} \geq A$ or $A_{2} \leq A$} The case $A_{1} \geq A$ is nearly trivial: in this case we only need to observe, by the monotonicity of $\beta$, that
\begin{displaymath}
\Delta^{-\frac{\beta(A_{1})}{2}} \geq \Delta^{-\frac{\beta(A)}{2}} \stackrel{\eqref{eq:minimal-non-conc}}{\geq} \delta^{-\frac{u-\epsilon}{2}}.
\end{displaymath}
Plugging this into \eqref{form135} leads to a better estimate than what we claimed.

Let us then consider the case $A_{2} \leq A$. Now the main observation is that since $\beta$ is $2$-Lipschitz, we have
\begin{align*}
\frac{1}{2}\beta(A_{2})
&\geq \frac{1}{2}\beta\left( A \right) - \left(A - A_{2} \right) \\
&\stackrel{\eqref{eq:minimal-non-conc}}{\geq}
\frac{(u-\epsilon)m}{2} - \left(A - A_{2} \right).
\end{align*}
Consequently, recalling that $\beta(m) = mt$, we get
\begin{align*}
\Delta^{(A_2-m) + \frac{1}{2}(\beta(m) - \beta(A_2))}
 &= \Delta^{A_{2} - A - \frac{1}{2}\beta(A_{2})} \\
&\geq \Delta^{-\frac{(u-\epsilon)m}{2}}
 = \delta^{-\frac{u-\epsilon}{2}}.
\end{align*}
Again, plugging this into \eqref{form135} leads to a better estimate than what we claimed. 

\subsubsection*{Case $A_{1} \leq A \leq A_{2}$} This is the main case. We split the factor in \eqref{form135} as
\begin{align*}
&\Delta^{-\frac{\beta(A_1)}{2}} \cdot \Delta^{\frac{s(A_1-A_2)}{2}} \cdot \Delta^{(A_2-m) + \frac{1}{2}(\beta(m) - \beta(A_2))} \\
&= \Delta^{-\frac{\beta(A_1)}{2}} \cdot \Delta^{\frac{s(A_{1} - A)}{2}} \cdot \Delta^{\frac{s(A - A_{2})}{2}} \cdot \Delta^{A_{2} - A - \frac{1}{2}\beta(A_{2})} \\
&= \Pi_{1} \cdot \Pi_{2} \cdot \Pi_{3} \cdot \Pi_{4}.
\end{align*}
It is desirable to combine the factors $\Pi_{3}$ and $\Pi_{4}$:
\begin{equation}\label{form120}
\Pi_{3} \cdot \Pi_{4} = \Delta^{(A_{2} - A)(1 - \frac{s}{2}) - \frac{1}{2}\beta(A_{2})}.
\end{equation}
We note that since $\beta(m) = mt$, and $\beta$ is $2$-Lipschitz, we have
\begin{displaymath}
\frac{1}{2}(mt - \beta(A_{2})) \leq m - A_{2} \quad \Longleftrightarrow \quad A_{2} - A \leq \frac{1}{2}\beta(A_{2}).
\end{displaymath}
Plugging this into \eqref{form120}, and using $A_{2} \geq A$, we find
\begin{align*}
\Pi_{3} \cdot \Pi_{4}
&= \Delta^{[(A_{2} - A)(1 - \frac{s}{2}) - (1 - \frac{s}{2})\frac{1}{2}\beta(A_{2})] - \frac{s}{4}\beta(A_{2})} \\
&\geq \Delta^{-\frac{\beta(A_{2})s}{4}}
\geq \Delta^{-\frac{\beta(A)s}{4}}
\stackrel{\eqref{eq:minimal-non-conc}}{\geq}
\Delta^{-\frac{ms(u-\epsilon)}{4}}
 = \delta^{-\frac{s(u-\epsilon)}{4}}.
\end{align*}
We can obtain a similar estimate for the product $\Pi_{1} \cdot \Pi_{2}$ by using the trivial estimate $\tfrac{1}{2}\beta(A_{1}) \geq \tfrac{1}{4}\beta(A_{1})s$, the $2$-Lipschitz property of $\beta$, and that $A_{1} \leq A$:
\begin{align*}
\Pi_{1} \cdot \Pi_{2}
&= \Delta^{-\frac{\beta(A_{1})}{2}} \cdot \Delta^{\frac{s(A_{1} - A)}{2}} \\
&\geq \Delta^{-\frac{\beta(A_{1})s}{4}} \cdot \Delta^{\frac{(\beta(A_{1}) - \beta(A))s}{4}} \\
&= \Delta^{-\frac{\beta(A)s}{4}}
\stackrel{\eqref{eq:minimal-non-conc}}{\geq}
\delta^{-\frac{s(u-\epsilon)}{4}}.
\end{align*}
All in all,
\begin{displaymath}
\Pi_{1} \cdot \Pi_{2} \cdot \Pi_{3} \cdot \Pi_{4} \geq \delta^{-\frac{s(u-\epsilon)}{2}}.
\end{displaymath}
Plugging this into \eqref{form135}, we obtain
\begin{displaymath}
\frac{|\mathcal{T}|}{M}\gtrapprox_{\delta}\delta^{-\Lambda},
\qquad
\Lambda:=\frac{t}{2}+\frac{s(u-\epsilon)}{2}-2\eta-\frac{3\xi}{2}.
\end{displaymath}
By the choices of $\eta,\epsilon,\xi$,
\begin{displaymath}
\Lambda
 =\frac{t}{2}+\frac{su}{2}-\Big(\frac{s\epsilon}{2}+2\eta+\frac{3\xi}{2}\Big)
 >\frac{t}{2}+\frac{su}{2}-\omega
 =\zeta.
\end{displaymath}
Therefore, for $\delta$ small enough (depending on $s,t,u,\zeta$), the claimed bound \eqref{form95} follows. \end{proof}

\section{Proof of Theorems \ref{thm1} and \ref{thm2}} 

\label{sec:proofs2}

We first prove Theorem \ref{thm1}. The proof of Theorem \ref{thm2} will only differ in the very final computations, and we will indicate the necessary changes. Fix $s \in (0,1]$, $t \in [s,2 - s]$, and $\eta > 0$. Let $P \subset [0,1]^{2}$ be a Katz-Tao $(\delta,t,\delta^{-\epsilon})$-set. The claim is that if $\delta,\epsilon > 0$ are sufficiently small in terms of $s,t,\eta$, then 
\begin{equation}\label{form7}
|\mathcal{T}| \geq \delta^{\eta}M|P|^{(s + t)/(2t)}.
\end{equation}

The "management of small constants" in these proofs is similar to that in the proof of Theorem \ref{thm:Furstenberg-minimal-non-concentration}. There are three main parameters: the given $\eta > 0$, then the $\epsilon > 0$ we will choose, and finally an intermediate parameter $\zeta \in (\epsilon,\eta)$, which is roughly the same as "$\epsilon_{F}$" in the proof of Theorem \ref{thm:Furstenberg-minimal-non-concentration}. Since adding in all the small constants (again) makes the argument look complicated, we opt here for a notationally lighter strategy: we allow the $\lessapprox_{\delta}$ notation to hide constants of the form $\delta^{-o_{\zeta \to 0}(1)}$. In this way, our final lower bound will have the form $\geq \delta^{o_{\zeta \to 0}(1)}M|P|^{(s + t)/(2t)}$, so we obtain \eqref{form7} by choosing $\zeta > 0$ small enough in terms of $\eta$. The required smallness of $\zeta$ will finally determine the threshold for $\epsilon \ll \zeta$, in fact via the relation \eqref{form18}.

%The $\gtrapprox_{\delta}$ notation will enable us to apply the Furstenberg set estimate, Theorem \ref{thm:FurstIntro}, at the endpoint $\gamma = \min\{t,(s + t)/2,1\}$ at the expense of replacing the lower bound $|\mathcal{T}|/M \geq \delta^{-\gamma}$ by 
%\begin{displaymath} |\mathcal{T}|/M \gtrapprox_{\delta} \delta^{-\min\{t,(s + t)/2,1\}}. \end{displaymath}

We then start the proof in earnest. First of all, we may assume that $P$ is a Katz-Tao $(\delta,t,1)$-set, since the Katz-Tao $(\delta,t,\delta^{-\epsilon})$-set $P$ contains a Katz-Tao $(\delta,t,1)$-subset $P'$ with cardinality $|P'| \gtrapprox_{\delta} |P|$, see \cite[Lemma 2.2]{2025arXiv251105091O}.

As in the proof of Theorem \ref{thm:Furstenberg-minimal-non-concentration}, we may assume that $\delta=\Delta^m$ and $P$ is $\{\Delta^j\}_{j=1}^m$-uniform for $\Delta\sim_{\epsilon}1$.  Let $\beta:[0,m]\to [0,2m]$ be the branching function of $P$, defined in Section \ref{subsec:uniform-sets}, and apply Lemma \ref{l:combinatorial-weak} to $\beta$, with parameters $d = 2$ and $\zeta$. We select $\epsilon > 0$ (depending on $\zeta$, therefore finally $\eta$) so small that 
\begin{equation}\label{form18}
2\epsilon/\tau(\zeta) \leq \zeta,
\end{equation}
where $\tau(\zeta) > 0$ is the parameter given by Lemma \ref{l:combinatorial-weak}. 

 Let $\{ a_j\}_{j = 0}^{n}$, $\{t_j\}_{j=0}^{n-1}$ be the objects given by Lemma \ref{l:combinatorial-weak}; then $n=O_{\zeta}(1)$ by Lemma \ref{l:combinatorial-weak}(i), and $a_{n} = m$. The hypothesis that $P$ is $(\delta,t,1)$-Katz-Tao implies the uniform upper bound
\begin{displaymath}
t_{j} \leq t,
\end{displaymath}
and in particular $t_j\le 2-s$ because $t\le 2-s$.

We apply Corollary \ref{cor:ShWa} to the sequence $\Delta^{a_j}$, $j=n,n-1,\ldots,0$. With the notation of the corollary, we bound
\begin{equation}\label{form8}
\frac{|\mathcal{T}|}{M}\gtrapprox_{\delta}  \prod_{j=0}^{n-1}  \frac{|\mathcal{T}_{\mathbf{p}_{j}}|}{M_j}.
\end{equation}
The implicit constant in the lower bound \eqref{form8} is of the form $\gtrsim (\log (1/\delta))^{-O_{\zeta}(1)}$, and in particular it is larger than $\delta^{\eta/2}$ if $\delta > 0$ is chosen sufficiently small.

Further in the notation of Corollary \ref{cor:ShWa}, $(S_{\mathbf{p}_{j}}(P \cap \mathbf{p}_{j}),\mathcal{T}_{\mathbf{p}_{j}})$ is a $(\Delta_{j},s,C_{\mathbf{p}_{j}},M_{j})$-nice configuration with $\Delta_{j} := \Delta^{a_{j + 1} - a_{j}}$, and $C_{\mathbf{p}_{j}} \lesssim (\log (1/\delta))^{O_{\epsilon}(1)}\delta^{-\epsilon}$. In particular, since $a_{j + 1} - a_{j} \geq \tau m$ according to Lemma \ref{l:combinatorial-weak}, and by \eqref{form18}, it holds 
\begin{equation}\label{form10}
C_{\mathbf{p}_{j}} \leq \Delta_{j}^{-2\epsilon/\tau} \leq \Delta_{j}^{-\zeta},
\end{equation}
provided that $\delta > 0$ was chosen sufficiently small.

We split the indices $\{0,\ldots,n - 1\}$ into $2$ classes:
\[
I_1 = \{ j \in \{0,\ldots,n - 1\} : t_j \in [0,s]\},\quad  I_2 = \{ j \in \{0,\ldots,n - 1\}: t_j \in (s,2 - s]\}.
\]
Recall that the numbers $t_j$ are increasing, and $a_{n} = m$. Let
\[
[0,\mathfrak{a}] = \cup_{j\in I_1} [a_j,a_{j+1}], \quad [\mathfrak{a},m]=\cup_{j\in I_2} [a_j,a_{j+1}],
\]
with the convention that $\mathfrak{a} = 0$ if $I_1=\emptyset$ and $\mathfrak{a} =m$ if $I_2=\emptyset$.

For $j\in I_1$, we apply Theorem \ref{thm:FurstIntro} to the configuration $(S_{\mathbf{p}_{j}}(P \cap \mathbf{p}_{j}),\mathcal{T}_{\mathbf{p}_{j}})$ at scale $\Delta_{j}$, and with $t_j \leq s$. Note that $S_{\mathbf{p}_{j}}(P \cap \mathbf{p}_{j})$ is $(\Delta_{j},t_{j},O_{\epsilon}(1))$-Frostman thanks to the $(t_{j},0)$-superlinearity of $(\beta,a_{j},a_{j + 1})$, see \cite[Lemma 8.3]{OS23} for the details. The conclusion is
\begin{equation}\label{form2}
\prod_{j\in I_1} \frac{|\mathcal{T}_{\mathbf{p}_{j}}|}{M_j} \gtrapprox_{\delta} \prod_{j\in I_1}  \Delta_{j}^{-t_{j}} \gtrapprox_{\delta} \Delta^{-\beta(\mathfrak{a})} = |P|_{\Delta^{\mathfrak{a}}}.
\end{equation}
In this bound it was crucial that the "niceness" constant of the configuration $(S_{\mathbf{p}_{j}}(P \cap \mathbf{p}_{j}),\mathcal{T}_{\mathbf{p}_{j}})$ is bounded from above by $\Delta_{j}^{-\zeta}$ thanks to \eqref{form10}. This guarantees that the implicit constant in \eqref{form2} is indeed of the form $\geq \delta^{o_{\zeta \to 0}(1)}$. Let us also mention that the middle estimate $\gtrapprox_{\delta} \Delta^{-\beta(\mathfrak{a})}$ in \eqref{form2} also uses Remark \ref{r:combinatorial-weak} in the same as the estimate \eqref{form20} in the proof of Theorem \ref{thm:Furstenberg-minimal-non-concentration}.

For $j\in I_2$, we similarly apply Theorem \ref{thm:FurstIntro} with $t_j \in [s,2 - s]$ and Remark \ref{r:combinatorial-weak}, to get
\begin{equation}\label{form3}
\prod_{j\in I_2} \frac{|\mathcal{T}_{\mathbf{p}_{j}}|}{M_j} \gtrapprox_{\delta} \prod_{j\in I_2} \Delta_{j}^{-(s + t_{j})/2} \gtrapprox_{\delta}  \Delta^{-\frac{\beta(m)-\beta(\mathfrak{a})}{2}} \Delta^{-\frac{s (m-\mathfrak{a})}{2}} = |P|_{\Delta^{\mathfrak{a}} \to \delta}^{1/2} \cdot \left(\frac{\Delta^{\mathfrak{a}}}{\delta} \right)^{s/2}.
\end{equation}
Here $|P|_{\Delta^{\mathfrak{a}} \to \delta} = |P|/|P|_{\Delta^{\mathfrak{a}}}$, and
\begin{displaymath}
\left(\frac{\Delta^{\mathfrak{a}}}{\delta} \right)^{s/2} \geq |P|_{\Delta^{\mathfrak{a}} \to \delta}^{s/(2t)}
\end{displaymath} 
thanks to the Katz-Tao $(\delta,t,1)$-set property of $P$. Combining the bounds from \eqref{form2}-\eqref{form3}, and noting that $1/2 + s/(2t) \leq 1$ by $s \leq t$, we obtain
\begin{displaymath}
\frac{|\mathcal{T}|}{M}\gtrapprox_{\delta} |P|_{\Delta^{\mathfrak{a}}}|P|_{\Delta^{\mathfrak{a}} \to \delta}^{1/2 + s/(2t)} \geq |P|_{\Delta^{\mathfrak{a}}}^{1/2 + s/(2t)}|P|_{\Delta^{\mathfrak{a}} \to \delta}^{1/2 + s/(2t)} = |P|^{(s + t)/(2t)}.
\end{displaymath} 
This completes the proof of Theorem \ref{thm1}.

We next discuss the modifications needed to prove Theorem \ref{thm2}. Let $A,B \subset \delta \Z \cap [0,1]$ as in the statement of Theorem \ref{thm2}. Then $P := A \times B$ satisfies the hypotheses of Theorem \ref{thm1} with $t := \alpha + \beta$, so we may repeat the proof to obtain the estimates \eqref{form2}-\eqref{form3}. Consequently,
\begin{equation}\label{form4}
\frac{|\mathcal{T}|}{M}\gtrapprox_{\delta} |P|_{\Delta^{\mathfrak{a}}}|P|_{\Delta^{\mathfrak{a}} \to \delta}^{1/2} \cdot \left(\frac{\Delta^{\mathfrak{a}}}{\delta} \right)^{s/2} = |A|_{\Delta^{\mathfrak{a}}}|B|_{\Delta^{\mathfrak{a}}}|A|_{\Delta^{\mathfrak{a}} \to \delta}^{1/2}|B|_{\Delta^{\mathfrak{a}} \to \delta}^{1/2} \cdot \left(\frac{\Delta^{\mathfrak{a}}}{\delta} \right)^{s/2}.
\end{equation} 
One minor difference, compared to previous argument, is that in the proof of Theorem \ref{thm1}, the given set $P$ was initially replaced by a $\{\Delta^{j}\}$-uniform subset of nearly comparable cardinality. Here we rather replace both $A$ and $B$ individually by $\{\Delta^{j}\}$-uniform subsets $A',B'$, and then define $P := A' \times B'$. Evidently $P$ is then also $\{\Delta^{j}\}$-uniform. Defining $P$ as a product set ensures that $|P|_{\Delta^{\mathfrak{a}}} = |A'|_{\Delta^{\mathfrak{a}}}|B'|_{\Delta^{\mathfrak{a}}}$ and $|P|_{\Delta^{\mathfrak{a}} \to \delta} = |A'|_{\Delta^{\mathfrak{a}} \to \delta}|B'|_{\Delta^{\mathfrak{a}} \to \delta}$, which was needed in \eqref{form4}.

To proceed, fix $\theta \in [0,1]$ satisfying 
\begin{equation}\label{form22}
\max\{\theta s/\alpha,(1 - \theta)s/\beta\} \leq 1.
\end{equation}
We will first obtain a $\theta$-dependent bound, and finally optimise $\theta$ to prove Theorem \ref{thm2}. By the Katz-Tao $(\delta,\alpha)$-set and $(\delta,\beta)$-set properties of $A$ and $B$,
\begin{displaymath}
\left(\frac{\Delta^{\mathfrak{a}}}{\delta} \right)^{s/2} = \left(\frac{\Delta^{\mathfrak{a}}}{\delta} \right)^{\theta s/2} \cdot \left(\frac{\Delta^{\mathfrak{a}}}{\delta} \right)^{(1 - \theta) s/2} \geq |A|_{\Delta^{\mathfrak{a}} \to \delta}^{\theta s/(2\alpha)}|B|_{\Delta^{\mathfrak{a}} \to \delta}^{(1 - \theta) s/(2\beta)}.
\end{displaymath} 
Plugging this lower bound into \eqref{form4}, and using 
\begin{displaymath}
1 \geq \max\{\frac{1}{2} + \theta s/(2\alpha),\frac{1}{2} + (1 - \theta)s/(2\beta)\},
\end{displaymath}
we conclude
\begin{align*} \frac{|\mathcal{T}|}{M} & \gtrapprox_{\delta} |A|_{\Delta^{\mathfrak{a}}}^{1/2 + \theta s/(2\alpha)}|B|_{\Delta^{\mathfrak{a}}}^{1/2 + (1 - \theta)s/(2\beta)}|A|_{\Delta^{\mathfrak{a}} \to \delta}^{1/2 + \theta s/(2\alpha)}|B|_{\Delta^{\mathfrak{a}} \to \delta}^{1/2 + (1 - \theta)s/(2\beta)}\\
& = (|A||B|)^{1/2}|B|^{s/(2\beta)} \cdot (|A|^{s/(2\alpha)}|B|^{-s/(2\beta)})^{\theta}. \end{align*} 
It remains to optimise the value of $\theta$ under the constraints $\theta \in [0,1]$ and \eqref{form22}. The assumption $|A|^{\beta} \geq |B|^{\alpha}$ is equivalent to $|A|^{s/(2\alpha)}|B|^{-s/(2\beta)} \geq 1$, so it is desirable to maximise $\theta$. 
\begin{remark} Actually the hypothesis $|A|^{\beta} \geq |B|^{\alpha}$ is not formally needed here -- or even in Theorem \ref{thm2} -- but it shows why we want to choose $\theta$ as large as possible. In the opposite case $|A|^{\beta} < |B|^{\alpha}$ we would instead minimise $\theta$ to obtain different bounds. We have not explicitly recorded these bounds anywhere, since they can be obtained by swapping the roles of $A,B$ in Theorem \ref{thm2}. \end{remark}
If $\alpha \leq s$, we choose $\theta := \alpha/s \in [0,1]$ to find
\begin{displaymath}
\frac{|\mathcal{T}|}{M} \gtrapprox_{\delta} |A||B|^{(\beta + s - \alpha)/(2\beta)}.
\end{displaymath}
(Note that the condition $(1 - \theta)s/\beta = (s - \alpha)/\beta \leq 1$ is guaranteed by $s \leq \alpha + \beta$.) If $\alpha \geq s$, we instead choose $\theta := 1$ to find
\begin{displaymath}
\frac{|\mathcal{T}|}{M} \gtrapprox_{\delta} |A|^{(\alpha + s)/(2\alpha)}|B|^{1/2}.
\end{displaymath}
This concludes the proof of Theorem \ref{thm2}.

\section{Sharpness of main results}

\label{sec:sharpness}

In this section we verify the sharpness of our main results. We start by recording the standard construction underlying the sharpness of the projection estimate corresponding to Theorem \ref{thm:FurstIntro}. This construction goes back at least to \cite{Wolff99}; see also \cite[\S2.2, Case 2]{FGR22} and \cite[Section A.1]{MR4745881}. In short, $P_{0}$ is a union of $\sim \rho^{-t}$ many $\rho$-balls arranged on a product grid, while $\Theta_{0}$ is obtained from arcs centred at rationals with small denominators.
\begin{lem}[Standard sharp projection example]\label{lem:standard-sharp-projection}
For every dyadic scale $\rho \in (0,1]$, every $s \in (0,1]$, and every $t \in [s,2-s]$, there exist:
\begin{itemize}
\item a set of the form $P_{0}=A\times A \subset \rho \Z^{2}\cap [0,1]^{2}$, where $A \subset \rho\Z\cap [0,1]$ is a Frostman and Katz-Tao $(\rho,t/2)$-set with $|A|\sim \rho^{-t/2}$,
\item a Frostman and Katz-Tao $(\rho,s)$-set $\Theta_{0}\subset S^{1}$ with $|\Theta_{0}|_{\rho}\sim \rho^{-s}$,
\end{itemize}
such that
\begin{displaymath}
|\pi_{\theta}(P_{0})|_{\rho} \lesssim \rho^{-(s+t)/2}, \qquad \theta \in \Theta_{0}.
\end{displaymath}
The Frostman and Katz-Tao constants in the claims above are absolute. Moreover, the elements of $P_{0}$ are $\gtrsim \rho^{t/2}$-separated.
\end{lem}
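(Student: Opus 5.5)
We plan to use the classical Wolff-type grid construction at scale $\rho$. Let $q_0 := \rho^{-s/2}$ (up to rounding to a dyadic-friendly integer) and $N := \rho^{-t/2}$. Define
\begin{displaymath}
A := \{ k/N : 0 \le k < N\} \subset [0,1],
\end{displaymath}
with points rounded to $\rho\Z$. Since $t \le 2-s \le 2$, we have $N \le \rho^{-1}$, so $A$ is $\rho^{t/2}$-separated. This immediately gives that $A$ is Frostman and Katz-Tao $(\rho,t/2)$ with absolute constants: an interval of length $r \ge \rho$ contains $\lesssim \max\{1, rN\}$ points. For the Katz-Tao bound we check $rN \le (r/\rho)^{t/2}$, which is equivalent to $r^{1-t/2} \le 1$. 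For the case $r<\rho^{t/2}$ we use the trivial count $1$. The Frostman bound follows analogously. Consequently $P_0 = A\times A$ is $\gtrsim \rho^{t/2}$-separated, as claimed.

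For the directions, parametrise $\pi_\theta$ by slopes, so that up to rotations and bi-Lipschitz changes $\pi_\theta(x,y) \approx x + \lambda y$. Let
\begin{displaymath}
\Lambda := \bigcup_{\substack{1 \le q \le q_0 \\ 0 \le p \le q}} \bigl[p/q,\; p/q + \rho^{1-t/2}q_0^{-1}\cdot c\bigr],
\end{displaymath}
more precisely taking arcs of length $\sim \rho\, N/ q_0$ so that the error term $\lambda' y$ with $|y|\le 1$ perturbs by at most $\rho N/q_0 \cdot$ (gridsize)... The plan is to choose the arc length $\ell$ so that, for $\lambda = p/q + e$ with $|e|\le \ell$, the image $\{ (k + \lambda j)/N\}$ lies within $O(\rho)$ of the lattice $\frac{1}{qN}\Z \cap [0,2]$ of cardinality $\lesssim qN$. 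This requires $\ell \lesssim \rho$, so $\ell=\rho$ suffices; then $|\pi_\theta(P_0)|_\rho \lesssim qN \le q_0 N$. Unfortunately $q_0 N = \rho^{-(s+t)/2}$ only with arcs of length $\rho$ around $\sim q_0^2=\rho^{-s}$ rationals, and then $\Theta_0$ is a $\rho$-separated set of $\sim \rho^{-s}$ Farey fractions with denominators $\le \rho^{-s/2}$.

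The key verification, which we expect to be the main obstacle, is that this set of Farey fractions is Frostman and Katz-Tao $(\rho,s)$ with absolute constants. Since distinct fractions with $q \le q_0$ are $\ge q_0^{-2} = \rho^{s}$-separated, an interval of length $r \le \rho^{s}$ contains $O(1)$ of them. For $r \ge \rho^{s}$, equidistribution of Farey fractions (an elementary count: for each $q$, the interval contains $\le rq+1$ numerators) gives
\begin{displaymath}
\lesssim r q_0^2 + q_0 \lesssim r\rho^{-s},
\end{displaymath}
since $r q_0^2 \ge q_0$ exactly when $r \ge q_0^{-1}$. The intermediate range $\rho^s \le r \le \rho^{s/2}$ needs care, where the count is $\lesssim rq_0^2 + q_0$. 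Here we refine the bound: at most $O(1)$ fractions per $q$ can have $rq<1$, and the pairwise $1/(qq')$ separation gives at most $\lesssim \sqrt{r}\, q_0$... To avoid this subtlety, we will instead take only denominators $q\in[q_0/2,q_0]$ (or primes in that range). Then fractions are $\gtrsim q_0^{-2}$ separated and an interval of length $r$ contains $\lesssim 1 + r q_0^2$ of them, which yields both the Frostman bound $\lesssim r^s \rho^{-s}$ (using $r\ge\rho$ and $1\le (r/\rho)^s$ as well as $rq_0^2 = r\rho^{-s}\le (r/\rho)^s$ for $r\le1$) and the Katz-Tao bound. The total count remains $\sim q_0^2=\rho^{-s}$. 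Finally, we convert slopes to points of $S^1$ by a bi-Lipschitz map, which preserves all the properties with absolute constants.
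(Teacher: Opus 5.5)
Your construction is the standard one the paper invokes without proof: a product grid $A\times A$ with $A$ an arithmetic progression of gap $\sim\rho^{t/2}$, and $\Theta_0$ the $\rho$-arcs centred at rationals $p/q$ with $q\le q_0\sim\rho^{-s/2}$. Your verifications are correct: the $\rho$-perturbation gives $|\pi_\theta(P_0)|_\rho\lesssim qN\le\rho^{-(s+t)/2}$, and the separation gives the Frostman/Katz-Tao bounds with absolute constants. The final restriction to $q\in[q_0/2,q_0]$ is unnecessary, since the separation $|p/q-p'/q'|\ge 1/(qq')\ge q_0^{-2}$ already holds for all reduced fractions of denominator at most $q_0$; the alternative of using only primes would also lose a logarithm in $|\Theta_0|_\rho$.
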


\subsection{Sharpness of Theorem \ref{thm:Furstenberg-minimal-non-concentration} and Corollary \ref{cor:projDeltaDiscretisedSingleScale}.}

\label{subsec:sharpness1}
 
It is enough to establish the sharpness of Corollary \ref{cor:projDeltaDiscretisedSingleScale}, since the corresponding sharpness for Theorem \ref{thm:Furstenberg-minimal-non-concentration} then follows by the standard projection/tube dictionary.

Fix parameters $s,t,u$ as in Corollary \ref{cor:projDeltaDiscretisedSingleScale}, and let $\delta>0$ be small. Choose dyadic scales $\Delta \sim \delta^{u}$ and $\rho \sim \delta^{1-t/2-u/2}$, so that
\begin{displaymath}
\delta^{1-t/2} \sim \rho\Delta^{1/2}
\quad\text{and}\quad
\rho^{2}\Delta\delta^{-2}\sim \delta^{-t}.
\end{displaymath}
Apply Lemma \ref{lem:standard-sharp-projection} with the parameter $t=1$ at scale $\Delta$. Thus there exist a set $\mathcal{P}_{1}=A_{1}\times A_{1} \subset \Delta^{1/2}\Z^{2}\cap [0,1]^{2}$ which is $(\Delta,1,O(1))$-Frostman, satisfies $|\mathcal{P}_{1}|_{\Delta}\sim \Delta^{-1}$, and, by the separation conclusion of the lemma, has the property that every $\Delta^{1/2}$-square meets $\mathcal{P}_{1}$ in $O(1)$ many $\Delta$-squares. Moreover, there exists a Frostman $(\Delta,s)$-set of slopes $\Theta_{1}$ such that
\begin{displaymath}
|\pi_{\theta}(\mathcal{P}_{1})|_{\Delta} \lesssim \Delta^{-(1+s)/2}, \qquad \theta \in \Theta_{1}.
\end{displaymath}

Now define $\mathcal{P}_{0}:=\rho\mathcal{P}_{1}$ (scaling by $\rho$). Then $|\mathcal{P}_{0}|_{\delta}\sim \delta^{-t}$, and because $\delta^{1-t/2}\sim \rho\Delta^{1/2}$, every $\delta^{1-t/2}$-square contains at most $O(\Delta)$-proportion of $\mathcal{P}_{0}$, i.e.
\begin{displaymath}
|\mathcal{P}_{0}\cap Q|_{\delta}\lesssim \delta^{u}|\mathcal{P}_{0}|_{\delta}, \qquad Q\in\mathcal{D}_{\delta^{1-t/2}}.
\end{displaymath}
Next modify $\Theta_{1}$ to a Frostman $(\delta,s,O(1))$-set $\Theta_{0}$ by replacing each $\Delta$-interval in $\Theta_{1}$ by $\sim (\Delta/\delta)^{s}$ many maximally spaced $\delta$-intervals. If $\theta'\in\Theta_{0}$ lies above $\theta\in\Theta_{1}$, then $|\theta'-\theta|\lesssim \Delta$, so
\begin{displaymath}
|\pi_{\theta'}(\mathcal{P}_{0})|_{\rho\Delta}\lesssim |\pi_{\theta}(\mathcal{P}_{0})|_{\rho\Delta}
\lesssim \Delta^{-(1+s)/2}.
\end{displaymath}
Passing from scale $\rho\Delta$ to scale $\delta$ gives, for every $\theta'\in\Theta_{0}$,
\begin{displaymath}
|\pi_{\theta'}(\mathcal{P}_{0})|_{\delta}
\lesssim \frac{\rho\Delta}{\delta}\cdot \Delta^{-(1+s)/2}
\sim \delta^{-(t/2+su/2)}.
\end{displaymath}
Since the same upper bound holds a fortiori for every subset $\mathcal{P}'\subset \mathcal{P}_{0}$, the exponent in Corollary \ref{cor:projDeltaDiscretisedSingleScale} is sharp.

\subsection{Sharpness of Theorem \ref{thm1}}

\label{subsec:sharpness2}

Fix $s \in (0,1]$, $t \in [s,2-s]$, and $\delta \in 2^{-\N}\cap(0,1]$. Choose $\rho \in 2^{-\N}\cap[\delta,1]$, and apply Lemma \ref{lem:standard-sharp-projection} at scale $\rho$. Then there exist a set $P_{0}$ and a Frostman $(\rho,s)$-set $\Theta_{0}$ as in the lemma, with $|P_{0}| \sim \rho^{-t}$ and
\begin{displaymath}
|\pi_{\theta}(P_{0})|_{\rho} \lesssim  \rho^{-(s+t)/2} \sim |P_{0}|^{(s+t)/(2t)}, \qquad \theta \in \Theta_{0}.
\end{displaymath}
Rescaling by the factor $\delta/\rho$, we obtain a Katz-Tao $(\delta,t,O(1))$-set $P := (\delta/\rho)P_{0}$ with $|P| = |P_{0}| \sim \rho^{-t}$ and
\begin{displaymath}
|\pi_{\theta}(P)|_{\delta} \lesssim \rho^{-(s+t)/2}\sim |P|^{(s+t)/(2t)}, \qquad \theta \in \Theta_{0}.
\end{displaymath}
Passing to the $\rho$-neighbourhood $\Theta$ of $\Theta_{0}$ preserves the same upper bound up to constants and yields a Frostman $(\delta,s)$-set of directions. Since $\rho$ can be chosen dyadically anywhere in $[\delta,1]$, the cardinality $|P| \sim \rho^{-t}$ ranges through the whole admissible interval $[1,\delta^{-t}]$ up to constants. This shows that the lower bound in Theorem \ref{thm1} is sharp throughout the admissible range of cardinalities.

\subsection{Sharpness of Theorem \ref{thm2}}\label{rem:sharpness-thm2} We finally discuss the sharpness of Theorem \ref{thm2} for product sets, but only in the special case $\alpha = \beta$, and $2\alpha \in [s,2 - s]$, as mentioned in Remark \ref{rem1}. In that case, recall that Theorem \ref{thm2} yields the bounds
\begin{equation}\label{form6}
|\mathcal{T}| \geq \delta^{\eta} M\begin{cases} |A||B|^{s/(2\alpha)}, & \alpha \leq s, \\ |A|^{1/2 + s/(2\alpha)}|B|^{1/2}, & \alpha \geq s, \end{cases}
\end{equation} 
for $|A| \geq |B|$ (this is also true but sub-optimal for $|A| < |B|$).

Let us first consider the sharpness of \eqref{form6} when $\alpha \leq s$. Fix the "target cardinalities" $|A| \geq |B|$. The aim is to construct Katz-Tao $(\delta,\alpha)$-sets $A,B \subset [0,1]$ with these cardinalities, and a family of tubes $\mathcal{T}$, such that $(A \times B,\mathcal{T})$ is a $(\delta,s,C,\delta^{-s})$-nice configuration, and the first bound in \eqref{form6} is attained.

Recall that the symmetric sharp example from Lemma \ref{lem:standard-sharp-projection} (with $t := 2\alpha$) has the form $B_{0}\times B_{0}$. Let $\ell:=\delta |B|^{1/\alpha}$, choose $B_{0}\subset \delta\Z\cap [0,\ell]$ to be a Katz-Tao $(\delta,\alpha)$-set with $|B_{0}|=|B|$, choose a Katz-Tao $(\ell,\alpha)$-set $D\subset \ell\Z\cap [0,1-\ell]$ with $|D|\sim N:=|A|/|B|$ (this is possible since $|A|\lesssim \delta^{-\alpha}$, hence $N\lesssim (\delta |B|^{1/\alpha})^{-\alpha}=\ell^{-\alpha}$), and define
\begin{displaymath}
A_{0}:=D+B_{0}.
\end{displaymath}
Then $|A_{0}|=N|B_{0}|\sim |A|$, and $A_{0}$ is again a Katz-Tao $(\delta,\alpha)$-set: if $I$ is an interval of length $r\le \ell$, it meets at most $O(1)$ translates $d+B_{0}$, while for $r\ge \ell$, it meets at most $O((r/\ell)^{\alpha})$ such translates, each contributing at most $|B_{0}|\sim (\ell/\delta)^{\alpha}$ points. Hence $A_{0}\times B_{0}$ is a union of $N$ disjoint translates of $B_{0}\times B_{0}$, and for the resulting configuration one has
\begin{displaymath}
|\mathcal{T}| \lesssim MN\cdot |B|^{(s + 2\alpha)/(2\alpha)} \sim M|A||B|^{s/(2\alpha)}.
\end{displaymath}
This matches the first bound in \eqref{form6}. 

We then discuss the sharpness of the second bound in \eqref{form6}. This is based on the following discrete statement:

\begin{lem}\label{lemma1} For $\mathbf{A},\mathbf{B},\mathbf{C} \in \N \, \setminus \, \{0\}$ satisfying 
\begin{equation}\label{form15}
\mathbf{A} \geq\{\mathbf{B},\mathbf{C}\} \quad \text{and} \quad \mathbf{A} \leq \mathbf{B}\mathbf{C},
\end{equation}
there exist sets $A,B,C \subset [0,1]$ of cardinalities comparable to $\mathbf{A},\mathbf{B},\mathbf{C}$, such that 
\begin{displaymath}
|A + cB| \lesssim \sqrt{\mathbf{A}\mathbf{B}\mathbf{C}}, \qquad c \in C.
\end{displaymath}
Moreover, the sets $A,B,C$ are maximally spaced:
\begin{equation}\label{form16}
|A|_{\mathbf{A}^{-1}} \sim \mathbf{A}, \quad |B|_{\mathbf{B}^{-1}} \sim \mathbf{B} \quad \text{and} \quad |C|_{\mathbf{C}^{-1}} \sim \mathbf{C}.
\end{equation}
\end{lem}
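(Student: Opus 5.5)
We construct $A,B,C$ as arithmetic-progression-type sets with rational slopes of small denominator, in the spirit of the standard construction behind Lemma \ref{lem:standard-sharp-projection}. The mechanism is that for $c = p/q$ with small $q$, the set $q(A + cB)$ lies in an integer arithmetic progression of controlled length.

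Concretely, set $\mathbf{B} = \mathbf{B}$ and take
\begin{displaymath}
B := \{ b/\mathbf{B} : 0 \le b < \mathbf{B}\} \subset [0,1],
\end{displaymath}
which is maximally spaced. Introduce a parameter $Q \ge 1$ and let $C$ consist of the rationals $p/q \in [0,1]$ with $q \sim Q$. After thinning to keep only reduced fractions and, if needed, a maximally separated subfamily, these are $\gtrsim Q^{-2}$-separated and number $\sim Q^{2}$. We therefore choose $Q := \mathbf{C}^{1/2}$. Then $|C| \sim \mathbf{C}$ and $C$ is $\mathbf{C}^{-1}$-separated, which gives \eqref{form16} for $C$. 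Finally take
\begin{displaymath}
A := \{ a/(\mathbf{B}L) : 0 \le a < \mathbf{A}\}
\end{displaymath}
for a scale parameter $L$. To keep $A \subset [0,1]$ maximally spaced we want $A$ to be an $\mathbf{A}$-element progression of step $\mathbf{A}^{-1}$. This forces the common denominator of $A$ to be $\mathbf{A}$, and it is cleaner to write all three sets as $\frac{1}{N}\Z$-progressions with a common modulus.

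The basic count runs as follows. For $c = p/q$, every element of $A + cB$ lies in $\frac{1}{\mathrm{lcm}}\Z \cap [0,2]$, so
\begin{displaymath}
|A + cB| \lesssim \min\{|A||B|,\ \text{(number of grid points)}\}.
\end{displaymath}
A better estimate uses a step adapted to $q$: write $A$ with step $1/(q\mathbf{B})$ on a window of length $\ell$. Then $A + (p/q)B \subset \frac{1}{q\mathbf{B}}\Z \cap [0,\ell + 1]$, which has $\lesssim q\mathbf{B}(\ell + 1)$ points. Since the step of $A$ cannot depend on $q$, the fix is the Wolff-type choice: let $A$ be the progression of step $1/(Q^{2}\mathbf{B})$, or equivalently use a common multiple $\sim Q^{2}$ of all $q \sim Q$, localised to a short interval. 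With $A$ of step $\mathbf{A}^{-1}$, the condition becomes that $q\mathbf{B}$ divides into a grid of size $\sim \sqrt{\mathbf{A}\mathbf{B}\mathbf{C}}$.

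The balancing of parameters is then the following. Rescale $B$ to be a progression of $\mathbf{B}$ points with step $\lambda$, and $A$ to have $\mathbf{A}$ points with step $\mu$. For $c = p/q$, the set $A + cB$ sits in a progression of step $\gcd$-type $\sim \min(\mu, \lambda/q)$ and length $\lesssim \mathbf{A}\mu + \lambda\mathbf{B}$. Choosing $\mu = \lambda/Q$ and $\lambda\mathbf{B} \sim \mathbf{A}\mu$ gives
\begin{displaymath}
|A + cB| \lesssim \mathbf{A}\mu/\mu = \mathbf{A},
\end{displaymath}
which is too strong. This signals that the maximal-spacing constraints (steps exactly $\mathbf{A}^{-1}$ and $\mathbf{B}^{-1}$) are what produce the $\sqrt{\mathbf{A}\mathbf{B}\mathbf{C}}$ loss. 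Under these constraints a grid of step $1/(q\mathbf{B})$ covering $[0,2]$ has $\sim Q\mathbf{B}$ points, but $A$ only occupies it partially. The remedy is to replace $A$ by a union of short progressions: $\mathbf{A}/K$ blocks of length $K$ with step $1/(Q\mathbf{B})$, placed at $\mathbf{A}^{-1}$-spaced centres. The parameter $K$ is chosen so that the blocks remain $\mathbf{A}^{-1}$-separated in the coarse sense required by \eqref{form16}.

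The main obstacle is this last balancing. We must choose the block structure of $A$ so that the maximal spacing \eqref{form16} holds while $|A + cB| \lesssim \sqrt{\mathbf{A}\mathbf{B}\mathbf{C}}$ for all $q \sim \sqrt{\mathbf{C}}$. The hypotheses \eqref{form15}, namely $\mathbf{A} \ge \mathbf{B},\mathbf{C}$ and $\mathbf{A} \le \mathbf{B}\mathbf{C}$, should be exactly what makes the block length and number of blocks at least $1$. If the direct arithmetic attempt fails, we fall back on rescaling the symmetric example of Lemma \ref{lem:standard-sharp-projection}, applied at the scale dictated by $\mathbf{B}\mathbf{C}/\mathbf{A}$, and then taking a product with an extra progression to enlarge $A$.
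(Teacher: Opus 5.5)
Your proposal has a genuine gap. It ends at what you call ``the main obstacle'' (the block balancing) and at a fallback that is never carried out, so no sets $A,B,C$ are ever fixed and checked.

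Worse, the one ingredient you commit to, the symmetric Wolff slope set $C=\{p/q\in[0,1]:\ q\sim\mathbf{C}^{1/2}\}$, is the wrong choice when $\mathbf{A}\gg\mathbf{B}$. Take $A=\mathbf{A}^{-1}\{0,\ldots,\mathbf{A}-1\}$, $B=\mathbf{B}^{-1}\{0,\ldots,\mathbf{B}-1\}$ with $\mathbf{A}=k\mathbf{B}$, and $c=p/q$ in lowest terms with $\gcd(q,k)=1$. Then the numbers $\mathbf{A}(a+cb)=i+pkj/q$ have pairwise distinct fractional parts for $0\le j<\min\{q,\mathbf{B}\}$. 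Hence $|A+cB|\ge\min\{q,\mathbf{B}\}\,\mathbf{A}$, which for $\mathbf{C}\le\mathbf{B}^{2}$ is $\sim\mathbf{A}\mathbf{C}^{1/2}$, a factor $\sqrt{\mathbf{A}/\mathbf{B}}$ above the target.

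The devices you use to escape this do not work:
\begin{itemize}
\item No common multiple of all $q\sim Q$ has size $\sim Q^{2}$; the least one is exponential in $Q$.
\item A progression of step $1/(Q^{2}\mathbf{B})$ is maximally spaced only if $\mathbf{A}\sim\mathbf{BC}$.
\item The common grid of $\mu\Z$ and $(p\lambda/q)\Z$ has a $\gcd$-type step, typically $\sim\mu/q$, not $\min\{\mu,\lambda/q\}$. That miscalculation is what produced your ``too strong'' bound $|A+cB|\lesssim\mathbf{A}$; the correct count is again $\sim q\mathbf{A}$, so it tells you nothing about maximal spacing.
\end{itemize}
Likewise, the natural version of your fallback, a union of $\mathbf{A}/\mathbf{B}$ translates of a symmetric example, only gives $|A+cB|\lesssim\mathbf{A}\mathbf{C}^{1/2}$ with your slopes.

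The missing idea is that the slopes must be adapted to the asymmetry. Their denominators should be multiples of the gap ratio $m\sim\mathbf{A}/\mathbf{B}$, with cofactor at most $V:=\sqrt{\mathbf{BC}/\mathbf{A}}$. This is what the paper arranges:
\begin{itemize}
\item $A$ is the progression of gap $\mathbf{A}^{-1}$ in $[0,1]$.
\item $B\subset A$ is the sub-progression of gap $m\mathbf{A}^{-1}$, with $m\in[\tfrac12\mathbf{A}/\mathbf{B},\mathbf{A}/\mathbf{B}]$.
\item $C$ consists of the ratios $(a_1-a_2)/(b_1-b_2)\in[0,1]$ with $a_i,b_i$ in the window $[0,\ell]$, where $\ell\sim\sqrt{\mathbf{C}/(\mathbf{AB})}$.
\end{itemize}
Every such slope has the form $c=u/(vm)$ with $0<|v|\lesssim V$. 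So $cB\subset(|v|\mathbf{A})^{-1}\Z$, and $A+cB\subset(|v|\mathbf{A})^{-1}\Z\cap[0,2]$ has $\lesssim V\mathbf{A}=\sqrt{\mathbf{ABC}}$ elements. The paper reaches the same bound $\ell|A||B|$ by exhibiting $\gtrsim\ell^{-1}$ representations of each $r\in A+cB$ in $(A+A-A)\times(B-B+B)$.

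The hypotheses enter exactly here: $\mathbf{A}\ge\mathbf{B}$ gives $m\ge1$, $\mathbf{A}\le\mathbf{BC}$ gives $V\ge1$, and $\mathbf{C}\le\mathbf{A}$ gives $\ell\le1$. The remaining claim $|C|_{\mathbf{C}^{-1}}\sim\mathbf{C}$ is proved in the paper by the Dirichlet-type Corollary~\ref{cor1}. It can also be seen directly: the values $u/(vm)$ with $v\in[V/2,V]$, $0\le u\le vm$, $\gcd(u,v)=1$ are distinct, there are $m\sum_{v}\varphi(v)\sim mV^{2}\sim\mathbf{C}$ of them, and they are pairwise at least $(V^{2}m)^{-1}\gtrsim\mathbf{C}^{-1}$ apart.
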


\begin{remark} This lemma is closely related to the "sharpness of optimal the $ABC$ sum-product theorem" in \cite[Theorem 1.2]{2024arXiv240912826L}. Since \cite{2024arXiv240912826L} does not appear to state the precise discrete claim useful for us, we give an elementary argument below. \end{remark} 

\begin{proof} Let $A \subset [0,1]$ be a maximal arithmetic progression (AP) containing $0$ with gap $\mathbf{A}^{-1}$. Next, let $B \subset A$ be a maximal sub-AP with $0 \in B$ and gap on the interval $[\tfrac{1}{2}\mathbf{B}^{-1},\mathbf{B}^{-1}]$. This exists, because $\mathbf{A}/\mathbf{B} \geq 1$, so there exists a natural number $m \in [\tfrac{1}{2}\mathbf{A}/\mathbf{B},\mathbf{A}/\mathbf{B}]$. With this notation the gap of $B$ could be $m \mathbf{A}^{-1}$.

Let $a \in A$ and $b \in B$ be the largest elements with 
\begin{displaymath}
a \leq \sqrt{\mathbf{C}/\mathbf{A}\mathbf{B}} \quad \text{and} \quad b \leq \sqrt{\mathbf{C}/\mathbf{A}\mathbf{B}},
\end{displaymath}
respectively. Note that $a \geq b > 0$ since $B \subset A$, and $\mathbf{B}^{-1} \leq \sqrt{\mathbf{C}/\mathbf{A}\mathbf{B}}$ thanks to \eqref{form15}. Using that $B$ is an AP, $b > 0$ implies that \emph{a fortiori} $a,b \sim \sqrt{\mathbf{C}/\mathbf{A}\mathbf{B}}$, and 
\begin{displaymath}
|B \cap [0,b]| \sim b\mathbf{B} \sim \sqrt{\mathbf{B}\mathbf{C}/\mathbf{A}}.
\end{displaymath}
Write
\begin{displaymath}
I := [0,a] \subset [0,\sqrt{\mathbf{C}/\mathbf{A}\mathbf{B}}] \stackrel{\eqref{form15}}{\subset} [0,1],
\end{displaymath}
and $A_{I} := A \cap I$, and $B_{I} := B \cap I = B \cap [0,b]$.

Let $C$ consist of rationals of the form $a/b \in [0,1]$, where 
\begin{displaymath}
a \in A_{I} - A_{I} \quad \text{and} \quad b \in (B_{I} - B_{I}) \, \setminus \, \{0\}.
\end{displaymath}
We claim that 
\begin{equation}\label{form24}
|C|_{\mathbf{C}^{-1}} \sim \mathbf{C}.
\end{equation} 
To see this, note first that $A' := \{\mathbf{A}a : a \in A \cap I\} = \{0,\ldots,\mathbf{A}a\}$ consists of all natural numbers between $0$ and $n := \mathbf{A}a$. Second, $B' := \{\mathbf{A}b : b \in B \cap I\} \subset \{0,\ldots,n\}$ is an arithmetic progression with $|B'| = |B_{I}| \sim \sqrt{\mathbf{B}\mathbf{C}/\mathbf{A}}$, and 
\begin{displaymath}
D := \mathrm{diam}(B') \sim \mathbf{A}b \sim \sqrt{\mathbf{A}\mathbf{C}/\mathbf{B}}.
\end{displaymath}
Now applying Corollary \ref{cor1} to $A'$ and $B'$ shows that
\begin{displaymath}
\Big| [0,1] \cap \frac{A' - A'}{(B' - B') \, \setminus \, \{0\}} \Big|_{(|B'|D)^{-1}} \sim |B'|D \sim \mathbf{C}.
\end{displaymath} 
This implies \eqref{form24}, since $A',B'$ are scaled copies of $A_{I},B_{I}$ with common factor $\mathbf{A}$.

Finally, we claim that
\begin{displaymath}
|A + cB| \lesssim \ell(I)|A||B| \sim \sqrt{\mathbf{A}\mathbf{B}\mathbf{C}}, \qquad c \in C,
\end{displaymath} 
where $\ell(I)$ is the length of $I$. To see this, fix $c = (a_{1} - a_{2})/(b_{1} - b_{2})$ and $r \in A + cB$, where $a_{j} \in A \cap I$ and $b_{j} \in B \cap I$. In particular $\max\{a_{j},b_{j}\} \leq \ell(I)$.

We will show that
\begin{equation}\label{form27}
|\{(a,b) \in (A + A - A) \times (B - B + B) : a + cb = r\}| \gtrsim \ell(I)^{-1}
\end{equation}
for all $r \in A + cB$, which yields the desired estimate
\begin{displaymath}
|A + cB| \lesssim \ell(I)|(A + A - A) \times (B - B + B)| \lesssim \ell(I)|A||B|.
\end{displaymath}
To prove \eqref{form27}, fix $r \in A + cB$, and let $(a_{0},b_{0}) \in A \times B$ with $a_{0} + cb_{0} = r$. Recalling that $c = (a_{1} - a_{2})/(b_{1} - b_{2})$, 
also
\begin{displaymath}
(a_{0} + ma_{1} - ma_{2}) + c(b_{0} - mb_{1} + mb_{2}) = r, \qquad m \in \Z.
\end{displaymath}
For $m \leq \ell(I)^{-1}$, it holds $\max\{ma_{j},mb_{j}\} \leq 1$. Thus, for $m \leq \ell(I)^{-1}$, the points $ma_{j}$ are elements of the AP $A \subset [0,1]$ (the maximal AP on $[0,1]$ with gap $\mathbf{A}^{-1}$) and the points $mb_{j}$ are similarly elements of $B \subset [0,1]$. Therefore $a_{0} + ma_{1} - ma_{2} \in A + A - A$ and $b_{0} - mb_{1} + mb_{2} \in B - B + B$ for $m \leq \ell(I)^{-1}$, giving \eqref{form27}.  \end{proof}

Now we get back to the sharpness of the second bound in \eqref{form6}. Let $|A|,|B| \in [1,\delta^{-\alpha}]$ with $|B| \leq |A|$ and $|B|^{\alpha} \geq |A|^{s - \alpha}$ be the "target cardinalities" for which we want to establish the sharpness of \eqref{form6}. The goal is to construct Katz-Tao $(\delta,\alpha)$-sets $A,B \subset [0,1]$ with roughly these cardinalities, and a Frostman $(\delta,s)$-set $\Theta \subset S^{1}$ such that $|\pi_{\theta}(A \times B)| \lesssim |A|^{1/2 + s/(2\alpha)}|B|^{1/2}$ for all $\theta \in \Theta$. To be precise, we will construct a Frostman $(\delta,s)$-set $C \subset [0,1]$ such that 
\begin{equation}\label{form25}
|A + cB|_{\delta} \lesssim |A|^{1/2 + s/(2\alpha)}|B|^{1/2}, \qquad c \in C.
\end{equation}

 Let $A_{0},B_{0},C_{0} \subset [0,1]$ be the sets from Lemma \ref{lemma1} with the choices 
\begin{displaymath}
\mathbf{A} := |A|, \quad \mathbf{B} := |B|, \quad \text{and} \quad \mathbf{C} := |A|^{s/\alpha}.
\end{displaymath}
Recalling that $s \leq \alpha$, one may check that the requirements \eqref{form15} are met; in particular $\mathbf{A} \leq \mathbf{B}\mathbf{C}$ is equivalent to $|B|^{\alpha} \geq |A|^{s - \alpha}$. Then \eqref{form16} promises that e.g. $C_{0}$ contains an $|A|^{-s/\alpha}$-separated subset of cardinality $\sim |C_{0}| \sim |A|^{s/\alpha}$. We reduce $C_{0}$ to this subset, so we assume in the sequel that $C_{0}$ is $|A|^{-s/\alpha}$-separated.

Write $\Delta := \delta |A|^{1/\alpha}$, and let
\begin{displaymath}
A := \Delta A_{0} \quad \text{and} \quad B := \Delta B_{0}.
\end{displaymath} 
Using $|B| \leq |A|$ and \eqref{form16}, it is easy to check that $A,B$ are Katz-Tao $(\delta,\alpha)$-sets. Moreover,
\begin{equation}\label{form17}
|A + cB|_{\delta} \leq |A_{0} + cB_{0}|_{|A|^{-1/\alpha}} \lesssim \sqrt{|A_{0}||B_{0}||C_{0}|} \sim |A|^{1/2 + s/(2\alpha)}|B|^{1/2}, \quad c \in C_{0}.
\end{equation} 
Evidently \eqref{form17} persists for all $c \in C$, where $C$ is the $|A|^{-1/\alpha}$-neighbourhood of $C_{0}$. Therefore we have established \eqref{form25} for $A,B$, and $C$. It remains is to check that $C$ is a Frostman $(\delta,s)$-set. 

First recall that $C_{0}$ is $|A|^{-s/\alpha}$-separated and $|C_{0}| \sim |A|^{s/\alpha}$. Since $s \leq 1$, it follows that
\begin{displaymath}
|C|_{\delta} \sim \frac{|A|^{-1/\alpha}}{\delta} \cdot |C_{0}| \sim |A|^{(s - 1)/\alpha}\delta^{-1}.
\end{displaymath}
Now, for $I \in \mathcal{D}_{r}([0,1])$ with $\delta \leq r \leq |A|^{-1/\alpha}$, we have $r^{1 - s} \leq |A|^{(s - 1)/\alpha}$, hence
\begin{displaymath}
|C \cap I|_{\delta} \leq (r/\delta) \leq r^{s}|A|^{(s - 1)/\alpha}\delta^{-1} = r^{s}|C|_{\delta}.
\end{displaymath}
For $|A|^{-1/\alpha} \leq r \leq |A|^{-s/\alpha}$ we use the $|A|^{-s/\alpha}$-separation of $C_{0}$ to estimate
\begin{displaymath}
|C \cap I|_{\delta} \lesssim \frac{|A|^{-1/\alpha}}{\delta} \leq r^{s}|A|^{(s - 1)/\alpha}\delta^{-1} \sim r^{s}|C|_{\delta}.
\end{displaymath} 
Finally, for $|A|^{-s/\alpha} \leq r \leq 1$, we use both the $|A|^{-s/\alpha}$-separation of $C_{0}$, and the Frostman $(|A|^{-s/\alpha},s)$-set property of $C_{0}$ to estimate
\begin{displaymath}
|C \cap I|_{\delta} \lesssim \frac{|A|^{-1/\alpha}}{\delta} \cdot |C_{0} \cap I| \leq \frac{|A|^{-1/\alpha}}{\delta} \cdot r^{s}|C_{0}| \sim r^{s}|C|_{\delta}.
\end{displaymath} 
We have now verified that $C$ is a Frostman $(\delta,s)$-set, and established the sharpness of the second bound in \eqref{form6}.

\appendix

\section{Ratios of arithmetic progressions}

This appendix contains some (with very high likelihood) standard facts which were needed to prove Lemma \ref{lemma1}. We start with the following variant of Dirichlet's approximation theorem:

\begin{lem}\label{lemmaDirichlet} Assume that $m,n \in \N \, \setminus \, \{0\}$ are such that $m \leq n$, and $m$ divides $n$. Write $A = \{0,1,\dots,n\}$ and $B = \{0,m,2m,\ldots,n\}$. Then, for all $x \in [0,1]$, there exist $a \in A - A$ and $b \in (B - B) \, \setminus \, \{0\}$ such that
\begin{displaymath}
|x - \frac{a}{b}| \leq \frac{m}{|b|n}.
\end{displaymath} 
\end{lem}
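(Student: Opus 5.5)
Write $x\in[0,1]$ and set $N := n/m \in \N$, so that $B = m\{0,1,\ldots,N\}$, $B - B = m\{-N,\ldots,N\}$ and $A - A = \{-n,\ldots,n\}$. The plan is to reduce the claim to the classical Dirichlet approximation theorem with denominator bound $N$. Concretely, I look for an integer $q$ with $1 \le q \le N$ and an integer $p$ such that
\begin{displaymath}
|mqx - p| \leq \frac{1}{N}.
\end{displaymath}
Given such $q,p$, I take $b := mq \in (B - B)\setminus\{0\}$ and $a := p$. Dividing by $b = mq$ gives
\begin{displaymath}
\Big|x - \frac{a}{b}\Big| \leq \frac{1}{Nmq} = \frac{m}{n \cdot mq} = \frac{m}{|b|n},
\end{displaymath}
since $Nm = n$. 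This is exactly the required bound.

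To produce $q$ and $p$, I apply the pigeonhole principle to the $N+1$ fractional parts $\{mkx\}$, $k = 0,1,\ldots,N$, placed into $N$ intervals $[i/N,(i+1)/N)$. Two of them, say with $k_1 < k_2$, fall in the same interval. Then $q := k_2 - k_1 \in [1,N]$, and taking $p$ to be the nearest integer to $mqx$ gives $|mqx - p| < 1/N$. This is the standard Dirichlet argument applied to the real number $mx$ instead of $x$.

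It remains to check that $a = p$ lies in $A - A = \{-n,\ldots,n\}$. Since $0 \le x \le 1$ and $1 \le q \le N$, we have $0 \le mqx \le mN = n$. The nearest integer to a number in $[0,n]$ lies in $\{0,\ldots,n\}$, so $p \in A - A$; in fact $p \in [0,n]$, which conveniently also makes $a/b \ge 0$.

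The only potential obstacle is the bookkeeping that keeps $a$ inside $A - A$, and it is resolved by the containment $mqx \in [0,n]$ just established. The boundary case $N = 1$ (i.e.\ $m = n$) is covered by the same argument: with $q = 1$ and $p \in \{0,n\}$ we get $|nx - p| \le 1$, trivially. No earlier results from the paper are needed.
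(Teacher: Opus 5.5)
Your proof is correct and is essentially the paper's argument: the same pigeonhole on the fractional parts of $kmx$, $k = 0,\ldots,n/m$. The only cosmetic difference is the choice of numerator: the paper takes $a := n_i - n_j$ (a difference of integer parts, automatically in $A - A$), while you take the nearest integer to $mqx$, which you correctly place in $\{0,\ldots,n\}$.

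One small slip: in your remark on $N = 1$ it should read $p \in \{0,\ldots,n\}$, not $p \in \{0,n\}$; the latter fails for instance when $x = 1/2$ and $n \geq 3$. That remark is superfluous anyway, since your general argument already covers $N = 1$.
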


\begin{proof} Fix $x \in [0,1]$. For every $k \in \{0,\ldots,n/m\}$ we may write $kmx = n_{k} + x_{k}$, where $n_{k} \in \{0,\ldots,km\} \subset A$ and $x_{k} \in [0,1)$. There are $n/m + 1$ many numbers $x_{k}$, so there is a pair $x_{i},x_{j}$ with $i \neq j$, and $|x_{i} - x_{j}| \leq m/n$. Consequently,
\begin{displaymath}
|(i - j)mx - (n_{i} - n_{j})| = |x_{i} - x_{j}| \leq m/n,
\end{displaymath}
which can be rearranged to
\begin{displaymath}
\left|x - \frac{n_{i} - n_{j}}{im - jm} \right| \leq \frac{m}{|im - jm|n}.
\end{displaymath} 
This gives the claim with $a = n_{i} - n_{j}$ and $b = im - jm$.
\end{proof} 

The following proposition is an asymmetric variant of \cite[Claim A.5]{MR4745881}

\begin{prop}\label{prop1} Let $A,B$ be as in Lemma \ref{lemmaDirichlet}. Then 
\begin{displaymath}
\left|[0,1] \cap \frac{A - A}{(B - B) \, \setminus \, \{0\}} \right|_{(|A||B|)^{-1}} \gtrsim |A||B| = \frac{n^{2}}{m}.
\end{displaymath} 
\end{prop}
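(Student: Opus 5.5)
The plan is to combine the Dirichlet-type Lemma \ref{lemmaDirichlet} with a counting argument over the approximating fractions. Write $N := |A||B| \sim n \cdot (n/m) = n^2/m$, and let $R := [0,1] \cap \frac{A-A}{(B-B)\setminus\{0\}}$. It suffices to show that the $N^{-1}$-neighbourhood of $R$ has Lebesgue measure $\gtrsim 1$. Indeed, any set whose $N^{-1}$-neighbourhood has measure $\gtrsim 1$ must meet $\gtrsim N$ dyadic intervals of length $N^{-1}$.

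\textbf{Step 1 (every $x$ is well approximated).} By Lemma \ref{lemmaDirichlet}, every $x \in [0,1]$ satisfies $|x - a/b| \le m/(|b|n)$ for some $a \in A-A$ and some $b = km$ with $1 \le |k| \le n/m$. We may take $b>0$ and, after a trivial adjustment near the endpoints, $a/b \in [0,1]$. Hence $[0,1]$ is covered by the intervals $J(a,b)$ centred at $a/b$ with radius $m/(bn)$.

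\textbf{Step 2 (split by the size of $b$).} Dyadically decompose $b = km$ according to $k \sim K$, where $K \in 2^{\N}$ and $K \le n/m$. The fractions with $k \sim K$ have denominators $b \sim Km$. Their numerators satisfy $a \le b \le 2Km$, so there are at most $\lesssim Km \cdot K$ such fractions. Each corresponding interval $J(a,b)$ has length $\sim m/(Kmn) = 1/(Kn)$. The total measure covered by the class $K$ is therefore $\lesssim K^2 m/(Kn) = Km/n$.

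\textbf{Step 3 (the large-$b$ class carries the measure).} Summing the Step 2 bound over $K \le c\,n/m$ gives total measure $\lesssim c$. Choosing $c$ a small absolute constant, the intervals with $k \le c\,n/m$ cover measure at most $1/2$. The remaining measure $\ge 1/2$ is covered by intervals with $k \ge c\,n/m$, that is $b \gtrsim n$. Each such interval has radius $m/(bn) \lesssim m/n^2 \sim N^{-1}$. Consequently the $O(N^{-1})$-neighbourhood of $R$ has measure $\ge 1/2$, which gives $|R|_{N^{-1}} \gtrsim N$ as claimed.

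The main obstacle is the counting in Step 2. One must bound the number of distinct fractions $a/b$ with $b \sim Km$ uniformly by $K^2 m$, keeping in mind that $a$ ranges over integers up to $b$. One must also check that the constant in the measure bound is absolute, so that a fixed $c$ works. If a sharper count is needed, I would first try restricting to reduced fractions and counting pairs directly. The crude count above already gives the geometric-sum bound, since the measure estimate $Km/n$ grows with $K$ and the sum over dyadic $K \le c\,n/m$ is dominated by its top term.
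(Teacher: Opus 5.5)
Your proposal is correct and is essentially the paper's argument. Both cover $[0,1]$ by the Dirichlet intervals. Both show that the intervals with denominators $|b| \le cn$ have total measure $\lesssim c$: each such $b$ carries $\lesssim |b|$ numerators and radius $m/(|b|n)$, and there are $\lesssim cn/m$ such $b$. Both then read off the $m/n^{2}$-covering number from the remaining intervals. Your dyadic grouping in $k$ only repackages the paper's per-denominator sum, and the crude numerator count is all that is needed. Your endpoint adjustment (replacing $a/b$ by $0 = 0/b$ or $1 = b/b$, using $B - B \subset A - A$) is also valid.
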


\begin{proof}

We rewrite the conclusion of Lemma \ref{lemmaDirichlet} as follows:
\begin{displaymath}
[0,1] \subset \bigcup_{b \in (B - B) \, \setminus \, \{0\}} \bigcup_{a \in A - A} B\left(\frac{a}{b},\frac{m}{|b|n}\right).
\end{displaymath}
We may sharpen this a little. If $b \in (B - B) \, \setminus \, \{0\}$ is fixed, all points $a \in A - A$ such that $|x - a/b| \leq (bn)^{-1}$ for some $x \in [0,1]$ satisfy $\mathrm{dist}(a,b[0,1]) \leq n^{-1}$. Thus $a$ lies on a certain interval $I_{b}$ of length at most $|b| + n^{-1}$. Taking this into account,
\begin{displaymath}
[0,1] \subset \bigcup_{b \in (B - B) \, \setminus \{0\}} \bigcup_{a \in (A - A) \cap I_{b}} B\left(\frac{a}{b},\frac{m}{|b|n}\right).
\end{displaymath}
Let us consider: how much of $[0,1]$ can be covered by that part of the previous union where $|b| \leq cn$? Here $c > 0$ is a suitable constant to be determined in a moment. Taking into account that $\mathrm{card}((A - A) \cap I_{b}) \leq |b| + 1 \leq 2|b|$ and $\mathrm{card}(\{b \in B - B : |b| \leq cn\}) \lesssim cn/m$, the measure of this "bad" part $I_{\mathrm{bad}} \subset [0,1]$ is at most
\begin{displaymath}
|I_{\mathrm{bad}}| \leq \mathop{\sum_{b \in (B - B) \, \setminus \, \{0\}}}_{|b| \leq cn} \mathrm{card}((A - A) \cap I_{b}) \cdot \frac{2m}{|b|n} \lesssim \mathop{\sum_{b \in (B - B) \, \setminus \, \{0\}}}_{|b| \leq cn} 2|b| \cdot \frac{2m}{|b|n} \lesssim c.   
\end{displaymath}
Thus, if $c > 0$ is small enough, we have $|[0,1] \, \setminus \, I_{\mathrm{bad}}| \geq \tfrac{1}{2}$. Moreover,
\begin{displaymath}
[0,1] \, \setminus \, I_{\mathrm{bad}} \subset \mathop{\bigcup_{b \in (B - B) \, \setminus \, \{0\}}}_{|b| \geq cn} \bigcup_{a \in (A - A)} B\left(\frac{a}{b},\frac{m}{|b|n} \right) \subset  \mathop{\bigcup_{b \in (B - B) \, \setminus \, \{0\}}}_{|b| \geq cn} \bigcup_{a \in (A - A)} B\left(\frac{a}{b},\frac{m}{cn^{2}} \right).
\end{displaymath} 
From this inclusion we see that
\begin{displaymath}
\frac{1}{2} \leq |[0,1] \, \setminus \, I_{\mathrm{bad}}| \lesssim |\{\frac{a}{b} : (a,b) \in (A - A) \times (B - B) \, \setminus \, \{0\}\}|_{m/n^{2}} \cdot \frac{m}{n^{2}},
\end{displaymath}
which gives the claim. \end{proof}

We finally record a more applicable version without divisibility hypotheses:

\begin{cor}\label{cor1} Let $n \in \N$, $A := \{0,\ldots,n\}$, and let $B \subset \{0,\ldots,n\}$ be an arithmetic progression with $D := \mathrm{diam}(B) > 0$. Then,
\begin{displaymath}
\left|[0,1] \cap \frac{A - A}{(B - B) \, \setminus \, \{0\}} \right|_{(|B|D)^{-1}} \gtrsim |B|D.
\end{displaymath} 
\end{cor}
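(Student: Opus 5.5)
Corollary \ref{cor1} follows from Proposition \ref{prop1} by passing to a subprogression whose common difference divides its length, at the cost of absolute constants. Write $B = \{b_{0}, b_{0}+m, \ldots, b_{0}+km\}$ with $k = |B|-1 \geq 1$, so $D = km$. Since $B - B = \{jm : |j| \leq k\}$ is translation invariant, we may replace $B$ by $\{0,m,\ldots,km\}$ without changing the ratio set. Set $n' := km = D \leq n$ and $A' := \{0,\ldots,n'\} \subset A$. Then $m$ divides $n'$, $m \leq n'$, and the pair $A'$, $B' := \{0,m,\ldots,n'\}$ is exactly as in Lemma \ref{lemmaDirichlet}.

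Since $A'-A' \subset A-A$ and $B'-B' = B-B$, the ratio set for $(A,B)$ contains the one for $(A',B')$. Proposition \ref{prop1} applied to $(A',B')$ gives
\begin{displaymath}
\left|[0,1] \cap \frac{A'-A'}{(B'-B')\setminus\{0\}}\right|_{(|A'||B'|)^{-1}} \gtrsim |A'||B'| = (D+1)|B|.
\end{displaymath}
Because $D \geq 1$, we have $(D+1)|B| \sim D|B|$, so the scales $(|A'||B'|)^{-1}$ and $(|B|D)^{-1}$ agree up to a factor $2$. Changing the covering scale by a bounded factor changes $|\cdot|_{r}$ only by a bounded factor, and this gives the claim.

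The only points needing care are the following. First, the proof of Proposition \ref{prop1} indeed treats $B' = \{0,m,\ldots,n'\}$ with $n'/m = k$, matching $|B'| = |B|$. Second, the degenerate case $k = 1$ (so $|B| = 2$) is already covered by the proposition's bound $n'^2/m$. I expect no real obstacle: the main step is noticing that the ratio set depends on $B$ only through the difference set $B - B$, which lets us shift $B$ to start at $0$ and shrink $A$ to length exactly $D$.
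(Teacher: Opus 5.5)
Your proposal is correct and follows the paper's proof: the paper likewise restricts $A$ to $A\cap[b_0,b_1]$ and translates by $b_0$ so that $B$ starts at $0$ with gap dividing $n_0 = D$, then applies Proposition \ref{prop1} using that the ratio set only grows when passing back to $A$. You also handle $|A'| = D+1$ versus $D$ with a bounded change of covering scale, a point the paper glosses over by writing $|A_0| = D$.
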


\begin{proof} Write $B = \{b_{0},b_{0} + m_{0},b_{0} + 2m_{0},\ldots,b_{1}\}$, where $b_{1} - b_{0} = D$, and $m_{0} \geq 1$ is the gap of $B$. It suffices to show that
\begin{displaymath}
\left|[0,1] \cap \frac{(A \cap [b_{0},b_{1}]) - (A \cap [b_{0},b_{1}])}{(B - B) \, \setminus \, \{0\}} \right|_{(|B|D)^{-1}} \gtrsim |B|D
\end{displaymath}
Write $n_{0} := b_{1} - b_{0} \in \N \, \setminus \, \{0\}$, $A_{0} := (A \cap [b_{0},b_{1}]) - b_{0} = \{0,\ldots,n_{0}\}$, and $B_{0} := B - b_{0} \subset \{0,\ldots,n_{0}\}$. With this notation,
\begin{displaymath}
  \frac{(A \cap [b_{0},b_{1}]) - (A \cap [b_{0},b_{1}])}{(B - B) \, \setminus \, \{0\}} = \frac{A_{0} - A_{0}}{(B_{0} - B_{0}) \, \setminus \, \{0\}}.
\end{displaymath}  
Now the gap "$m_{0}$" of $B$ (hence $B_{0}$) divides $n_{0}$, so we may apply Proposition \ref{prop1}. Noting also that $|A_{0}| = D$ and $|B_{0}| = |B|$, the conclusion is
\begin{displaymath}
\left|[0,1] \cap \frac{A - A}{(B - B) \, \setminus \, \{0\}} \right|_{(|B|D)^{-1}} \geq \Big| [0,1] \cap \frac{A_{0} - A_{0}}{(B_{0} - B_{0}) \, \setminus \, \{0\}} \Big|_{(|B|D)^{-1}} \gtrsim |B|D,
\end{displaymath}  
as desired. \end{proof}

\def\cprime{$'$}

%\bibliographystyle{plain}
%\bibliography{references}

\end{document}